\newtheorem{theorem}{Theorem}[section]
\newtheorem{lemma}[theorem]{Lemma}
\newtheorem{remark}[theorem]{Remark}
\numberwithin{equation}{section}
\title{Nontrival solution for nonlinear $p(x)$--Laplacian Dirichlet problem with the sign--changing weight}
\author{Sylwia Dudek}
\affil{\small 
Krakow University of Technology,
Institute of Mathematics,\newline
ul. Warszawska 24, 31-155 Krakow, Poland \newline e--mail: sbarnas@pk.edu.pl}
\date{}
\begin{document}

\maketitle
\bibliographystyle{plain}

\noindent \textbf{Abstract:}
  In this paper we study the nonlinear elliptic problem involving
  $p(x)$--Laplacian with nonsmooth potential, where the weighted function $\lambda$ 
	may change sign. By using critical point theory for locally Lipschitz functionals due to Chang \cite{changg}, 
	we obtain conditions which ensure the existence of a solution for our problem.

\noindent \textbf{Keywords:}
  $p(x)$--Laplacian, hemivariational inequality, Cerami condition, mountain pass theorem, variable exponent Sobolev space.

\section{Introduction} \label{Intr}
  Let $\Omega\subseteq\mathbb{R}^N$ be a bounded domain with the smooth boundary $\partial \Omega$.
  In this paper we study the following nonlinear hemivariational inequality with $p(x)$--Laplacian

\begin{equation}\label{eq_01}
  \left\{
  \begin{array}{lr}
    -\Delta_{p(x)}u(x)-\lambda |u(x)|^{p(x)-2} u(x) \in \partial j(x, u(x))& \textrm{ a.e. in  } \Omega,\\
    u=0 & \textrm{on}\ \partial \Omega,
  \end{array}
  \right.
\end{equation}
  where $p:\overline{\Omega} \rightarrow \mathbb{R}$ is a continuous function satisfying
\[
  1<p^- \leqslant p(x) \leqslant p^+ < \widehat{p}^* \quad \mbox{for a.e. } \; x\in \Omega
\]
  with $p^-:=\inf\limits_{x \in \overline{\Omega}} p(x)$,  $p^+:=\sup\limits_{x \in \overline{\Omega}} p(x)$ and \[
  \widehat{p}^*:=\left\{
  \begin{array}{ll}
   \frac{Np^-}{N-p^-} & p(x)<N\\
    \infty & p(x) \geqslant N.
  \end{array}
  \right.
\]
The operator 
$
\Delta_{p(x)}u(x):= \textrm{div} \big( |\nabla u(x)|^{p(x)-2} \nabla u(x) \big)
$
  is the so--called $p(x)$--Laplacian.
 The function
$j(x,t)$ is locally Lipschitz
  in the $t$--variable
  and measurable in the $x$--variable and by $\partial j(x,t)$ we denote the subdifferential with respect
  to the $t$--variable in the sense of Clarke \cite{Clarke}.

  Recently, hemivariational inequalities have attracted more and more attention.
  The study of such problems arises in nonlinear elasticity theory and in
  physical phenomena, in which we dealt with nonconvex and nonsmooth energy 
  functionals. We can find such functions for example in fluid mechanics, in the image 
  restoration and in the calculus of variations. Moreover, we deal with the variable exponent spaces.
	The typical examples of equations stated in the variable exponent spaces are models of electrorheological fluids.
	This kind of~materials have been intensively investigated recently.
Electrorheological fluids change their mechanical properties dramatically when 
an external electric field is applied, so the variable exponent settings are natural 
for their modelling. Several applications in the 
  electrorheological fluids problems involving $p(x)$--growth conditions can be
  found in the books of Naniewicz--Panagiotopulous \cite{nana} and 
  Ru$\check{\textrm{z}}$i$\check{\textrm{c}}$ka \cite{ruzicka}.

  The starting point for hemivariational inequalities with $p(x)$--Laplacian were this 
  with constant exponent, it means with $p(x) \equiv p$. For example, the following 
  differential inclusion problem was considered

\begin{equation}\label{eq_04}
  \left\{
  \begin{array}{lr}
    -\Delta_{p}u(x)-\lambda |u(x)|^{p-2} u(x) \in \partial j(x, u(x))& \textrm{a.e. in  } \Omega,\\
    u=0 & \textrm{on}\ \partial \Omega,
  \end{array}
  \right.
\end{equation}
  where $\lambda>0$ is a first eigenvalue of $p$--Laplacian. For instance, the 
  existence of nontrival solution for Dirichlet problem (\ref{eq_04}) at resonance
  under different type of conditions was proved in papers of Gasi\'nski--Papageorgiou
  \cite{lg, lg1, lg2}. Their methods are based on the critical point theory for 
  Locally Lipschitz functionals and on the Ekeland variational principle. 
  Marano--Bisci--Motreanu in \cite{2} proved the existence of multiple solutions for     
  (\ref{eq_04}) by the use of Struwe techniques and the saddle point theory. There are also
  many others authors who studied hemivariational inequalities with Dirichlet or 
  Neumann boundary conditions.

  Partial differential equations involving variable exponents and nonstandard growth
  conditions were also studied by many authors. In the paper of Ge--Xue--Zhou 
  \cite{ge2} the existence of radial solutions for problem (\ref{eq_01}) was proved.
  The authors required that $\lambda>0$ and used a key assumption on the exponent 
  that $p^+<N$. The problem with $p(x)$--Laplacian and with Neumann boundary condition
  was considered by Qian--Shen--Yang \cite{qian}. They refused the assumption about
  positivity of $\lambda$ but still needed assumption on the variable exponent, it 
  means $\sqrt{2}p^->N$.

  In this paper we have the situation that $\lambda \in \mathbb{R}$ and we have no
  restriction on $\lambda$ like in Barna\'s \cite{barnas, barnas2,barnas3} and the 
  papers of many authors. It is an extension of the theory considered in the above 
  mentioned papers. Moreover, we abandon the restriction on the exponent $p(x)$. Our approach is
   based on the critical point theory for nonsmooth Lipschitz functionals due to 
  Chang \cite{changg}.

 In the next section we briefly present the basic properties of the generalized 
 Lebesgue spaces and the generalized Lebesgue--Sobolev spaces.
 Moreover, we present the basic notions and facts from the theory, which will be used
 in the study of problem (\ref{eq_01}).

\section{Mathematical preliminaries} \label{Prelim}

 In order to discuss problem (\ref{eq_01}), we need to state some properties of the 
 spaces $L^{p(x)}(\Omega)$ and $W^{1,p(x)}(\Omega)$,
 which we call correspondingly generalized Lebesgue spaces and generalized Lebesgue--Sobolev spaces (see Fan--Zhao \cite{fan,orlicz}).

Denote by
$
  E(\Omega)
$ the set of all measurable real functions defined on $\Omega$.
  Two functions in $E(\Omega)$ are considered to be one element
  of $E(\Omega)$, when they are equal almost everywhere. The generalized Lebesgue space is defined as
\[
    L^{p(x)}(\Omega)=\{u \in E(\Omega): \int_{\Omega} |u(x)|^{p(x)} dx<\infty \},
\]
  equipped with the norm
\[
    \|u\|_{p(x)}=\|u\|_{L^{p(x)}(\Omega)}=\inf \Big\{\lambda>0: \int_{\Omega} \Big|\frac{u(x)}{\lambda}\Big|^{p(x)}dx \leqslant 1 \Big\}.
\]
  Next, we define the generalized Lebesgue--Sobolev space $W^{1,p(x)}(\Omega)$ by
\[
    W^{1,p(x)}(\Omega) = \{u \in L^{p(x)}(\Omega):\nabla u \in L^{p(x)}(\Omega; \mathbb{R}^N)\}
\]
  with the norm
\[
    \|u\|=\|u\|_{W^{1,p(x)}(\Omega)} = \|u\|_{p(x)}+\|\nabla u\|_{p(x)}.
\]

  Then $(L^{p(x)}(\Omega),\|\cdot\|_{p(x)})$ and $(W^{1,p(x)}(\Omega),\|\cdot\|)$ are
  separable and refelxive Banach spaces. By $W_0^{1,p(x)}(\Omega)$ we denote the 
  closure of $C_0^\infty (\Omega)$ in $W^{1,p(x)}(\Omega)$.

\begin{lemma} [Fan--Zhao \cite{fan}]  \label{fan}
 If $\Omega \subseteq \mathbb{R}^N$ is an open domain, then

\noindent 
  (a) if $1 \leqslant q(x) \in \mathcal{C}(\overline{\Omega})$ and $q(x) \leqslant p^*(x)$ (respectively $q(x) < p^*(x)$) for any $x \in \overline{\Omega}$, where
\[
  p^*(x)=\left\{
  \begin{array}{ll}
   \frac{Np(x)}{N-p(x)} & p(x)<N\\
    \infty & p(x) \geqslant N,
  \end{array}
  \right.
\]
  then $W^{1,p(x)}(\Omega)$ is embedded continuously (respectively compactly) in $L^{q(x)}(\Omega)$;

\noindent
 (b) Poincar\'e inequality in $W_0^{1,p(x)}(\Omega)$ holds i.e., there exists a positive constant $c$ such that
 \[
  \|u\|_{p(x)} \leqslant c \|\nabla u\|_{p(x)}  \qquad \textrm{for all } u \in W_0^{1,p(x)}(\Omega);
\]

\noindent
 (c) $(L^{p(x)}(\Omega))^*=L^{p'(x)}(\Omega)$, where $\frac{1}{p(x)}+\frac{1}{p'(x)}=1$
   and for all $u \in L^{p(x)}(\Omega)$ and $v \in L^{p'(x)}(\Omega)$, we have
\[
  \int_{\Omega}|uv|dx\leqslant \Big(\frac{1}{p^-}+\frac{1}{
  {p'}^{-}}\Big)\|u\|_{p(x)}\|v\|_{p'(x)}.
\]
\end{lemma}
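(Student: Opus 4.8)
Since this lemma only assembles standard facts about variable exponent spaces, I would prove its three parts independently. Part (c) is the most self-contained: the key tool is the pointwise Young inequality $st\le \frac{s^{p(x)}}{p(x)}+\frac{t^{p'(x)}}{p'(x)}$ for $s,t\ge 0$. Given nonzero $u\in L^{p(x)}(\Omega)$ and $v\in L^{p'(x)}(\Omega)$ I apply it with $s=|u(x)|/\|u\|_{p(x)}$ and $t=|v(x)|/\|v\|_{p'(x)}$ and integrate over $\Omega$; since the Luxemburg norm gives $\int_\Omega\big|u/\|u\|_{p(x)}\big|^{p(x)}\,dx\le 1$ (and likewise for $v$) while $\frac{1}{p(x)}\le\frac{1}{p^-}$ and $\frac{1}{p'(x)}\le\frac{1}{(p')^-}$ a.e., the stated Hölder-type inequality follows. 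This inequality shows that $v\mapsto\big(u\mapsto\int_\Omega uv\,dx\big)$ maps $L^{p'(x)}(\Omega)$ boundedly into $(L^{p(x)}(\Omega))^*$; testing against truncations of $u=|v|^{p'(x)-2}v$ gives a comparable lower bound, so the map is an isomorphism onto its image. For surjectivity I would imitate the classical $L^p$ argument: simple functions are dense in $L^{p(x)}(\Omega)$ because $p^+<\infty$, so for $\phi\in(L^{p(x)}(\Omega))^*$ the set function $A\mapsto\phi(\chi_A)$ is $\sigma$-additive and absolutely continuous with respect to Lebesgue measure; Radon--Nikodym then supplies a density $v\in L^1_{\mathrm{loc}}(\Omega)$ representing $\phi$, and testing $\phi$ on truncations of $|v|^{p'(x)-2}v$ together with the reverse Young inequality forces $v\in L^{p'(x)}(\Omega)$.

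For part (b), density of $C_0^\infty(\Omega)$ in $W_0^{1,p(x)}(\Omega)$ reduces the claim to proving $\|u\|_{p(x)}\le c\|\nabla u\|_{p(x)}$ for $u\in C_0^\infty(\Omega)$. Extending $u$ by zero to a cube $Q=(a,b)\times Q'\supseteq\Omega$ and writing $u(x_1,x')=\int_a^{x_1}\partial_{x_1}u(t,x')\,dt$ gives $|u(x)|\le\int_a^b|\nabla u(t,x')|\,dt$ a.e.; applying the Hölder inequality of part (c) slicewise and then over $Q'$, and using that continuity of $p$ on the compact set $\overline{\Omega}$ yields the uniform bracket $p^-\le p(\cdot)\le p^+$, produces the estimate with $c$ depending only on $N$, $p^-$, $p^+$ and $\mathrm{diam}\,\Omega$. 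An alternative, slicker route available once part (a) is in hand: were no such $c$ to exist, there would be $u_n$ with $\|u_n\|_{p(x)}=1$ and $\|\nabla u_n\|_{p(x)}\to 0$; by the compact embedding a subsequence converges in $L^{p(x)}(\Omega)$ to some $u$ with $\nabla u=0$ and $u\in W_0^{1,p(x)}(\Omega)$, hence $u=0$, contradicting $\|u\|_{p(x)}=1$.

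For part (a) the idea is to bracket the variable exponent spaces between constant ones. Since $\Omega$ is bounded, $L^{p^+}(\Omega)\hookrightarrow L^{p(x)}(\Omega)\hookrightarrow L^{p^-}(\Omega)$, with the analogous chain for $q(x)$, so $W^{1,p(x)}(\Omega)\hookrightarrow W^{1,p^-}(\Omega)$; combining this with the classical Sobolev embedding and, for the compact case, the Rellich--Kondrachov theorem for the constant exponent $p^->1$ (noting $\widehat{p}^*=(p^-)^*$ when $p^-<N$) already yields the continuous, respectively compact, embedding into every $L^{q(x)}(\Omega)$ with $q(x)\le\widehat{p}^*$ on $\overline{\Omega}$. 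To reach the sharp pointwise condition $q(x)\le p^*(x)$ --- genuinely weaker, since $p^*(x)\ge\widehat{p}^*$ everywhere --- I would localize with a finite partition of unity subordinate to balls on which $p$ oscillates by less than a prescribed $\varepsilon$: on each such ball $p^*(\cdot)$ is trapped between two nearby constant critical exponents, so the constant-exponent Sobolev (or Rellich) estimate applies there, and summing over the cover gives the global embedding. I expect this last localization step --- upgrading the crude constant exponent $\widehat{p}^*$ to the pointwise critical $p^*(x)$, and keeping compactness uniform over the finitely many pieces --- to be the main obstacle; everything else reduces to the scalar Young/Hölder inequalities and classical Sobolev theory.
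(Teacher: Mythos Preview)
The paper does not give its own proof of this lemma: it is stated with attribution to Fan--Zhao \cite{fan,orlicz} and used as a black box, so there is nothing in the paper to compare your argument against. What you have written is essentially the standard route taken in those original references (Young's inequality plus Radon--Nikodym for (c), localization by a partition of unity on small-oscillation patches for the sharp embedding in (a), and either the compactness-contradiction argument or a direct one-dimensional integration for (b)), so in that sense your proposal matches the literature the paper is citing.

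Two small cautions if you actually write this out. First, in your direct argument for (b), the step ``apply the H\"older inequality of part (c) slicewise'' is more delicate than in the constant-exponent case, because after writing $|u(x_1,x')|\le\int_a^b|\nabla u(t,x')|\,dt$ the exponent $p(x_1,x')$ still depends on $x_1$; one usually handles this by working with the Luxemburg norm and the unit-ball characterization rather than by raising to the $p(x)$-th power pointwise. Your alternative compactness argument avoids this entirely and is the cleaner choice, provided you establish part (a) independently of (b) so as not to be circular. Second, in the localization for (a), when you multiply by a cutoff $\varphi_i$ the gradient acquires a term $u\,\nabla\varphi_i$, so the local Sobolev estimate on each patch must be applied to the full $W^{1,p_i}$-norm of $\varphi_i u$, not just to $\nabla(\varphi_i u)$; this is harmless but should be said explicitly.
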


\begin{lemma}[Fan--Zhao \cite{fan}]\label{lemma2}
  Let $\varphi (u)=\int_{\Omega} |u(x)|^{p(x)} dx$ for $u \in L^{p(x)}(\Omega)$ and let
  $\{u_n\}_{n \geqslant 1} \subseteq L^{p(x)}(\Omega)$.

\noindent (a) for $a\neq 0$, we have \quad
 $\|u\|_{p(x)} =a \Longleftrightarrow \varphi (\frac{u}{a})=1$;

\noindent (b) we have
\begin{center}$\|u\|_{p(x)}<1 \; \Longleftrightarrow \; \varphi(u)<1$;\end{center}

\begin{center}$\|u\|_{p(x)}=1 \; \Longleftrightarrow \;  \varphi(u)=1$;\end{center}

\begin{center}$\|u\|_{p(x)}>1 \; \Longleftrightarrow \; \varphi(u)>1$;\end{center}

\noindent (c) if $\|u\|_{p(x)}>1$, then
\begin{center}$\|u\|^{p^-}_{p(x)} \leqslant \varphi (u) \leqslant \|u\|^{p^+}_{p(x)}$;\end{center}

\noindent (d) if $\|u\|_{p(x)}<1$, then
\begin{center}$\|u\|^{p^+}_{p(x)} \leqslant \varphi (u) \leqslant \|u\|^{p^-}_{p(x)}$;\end{center}

\noindent (e) we have
\begin{center}$\lim\limits_{n \rightarrow \infty} \|u_n\|_{p(x)} =0 \; \Longleftrightarrow \; \lim\limits_{n \rightarrow \infty} \varphi(u_n) =0$;\end{center}

\noindent (f) we have
\begin{center}$\lim\limits_{n \rightarrow \infty}\|u_n\|_{p(x)} = \infty \; \Longleftrightarrow \; \lim\limits_{n \rightarrow \infty} \varphi (u_n) =\infty$.\end{center}

\end{lemma}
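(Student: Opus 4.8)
\section*{Proof proposal}

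The plan is to deduce everything from elementary properties of the auxiliary one‑variable function $\rho_u(\lambda):=\varphi(u/\lambda)=\int_{\Omega}\lambda^{-p(x)}|u(x)|^{p(x)}\,dx$, defined for $\lambda>0$ and a fixed nonzero $u\in L^{p(x)}(\Omega)$ (for $u=0$ every assertion is trivial: $\|u\|_{p(x)}=0$ and $\varphi(u)=0$, and $0^{p^-}=0^{p^+}=0$ since $p^\pm>0$). First I would establish that $\rho_u$ is continuous and strictly decreasing on $(0,\infty)$, with $\rho_u(\lambda)\to+\infty$ as $\lambda\to0^+$ and $\rho_u(\lambda)\to0$ as $\lambda\to+\infty$. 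Strict monotonicity is immediate because for each $x$ the map $\lambda\mapsto\lambda^{-p(x)}$ is strictly decreasing (as $p(x)\ge p^->0$) and $\{u\neq0\}$ has positive measure; continuity and the two limits follow from the dominated convergence theorem, with dominating function $\max\{\alpha^{-p^-},\alpha^{-p^+}\}\,|u(x)|^{p(x)}\in L^1(\Omega)$ on any subinterval $\{\lambda\ge\alpha\}$, valid since $\lambda^{-p(x)}\le\alpha^{-p(x)}\le\max\{\alpha^{-p^-},\alpha^{-p^+}\}$ — it is precisely here that the hypothesis $p^-\le p(x)\le p^+<\infty$ is used. Hence there is a unique $\lambda_0>0$ with $\rho_u(\lambda_0)=1$, the sublevel set $\{\lambda>0:\rho_u(\lambda)\le1\}$ equals $[\lambda_0,+\infty)$, and therefore $\|u\|_{p(x)}=\lambda_0$. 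Statement (a) is then immediate: for $a>0$, $\|u\|_{p(x)}=a$ iff $a=\lambda_0$ iff $\rho_u(a)=\varphi(u/a)=1$.

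Statement (b) follows by comparing $\lambda=1$ with $\lambda_0=\|u\|_{p(x)}$ and using that $\rho_u$ is strictly decreasing with $\rho_u(\lambda_0)=1$: if $\|u\|_{p(x)}<1$ then $\varphi(u)=\rho_u(1)<\rho_u(\lambda_0)=1$, if $\|u\|_{p(x)}=1$ then $\varphi(u)=\rho_u(1)=1$, and if $\|u\|_{p(x)}>1$ then $\varphi(u)=\rho_u(1)>1$; since on each side the three alternatives are mutually exclusive and exhaustive, each implication reverses, giving the three equivalences.

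For (c) and (d) I would fix a nonzero $u$, set $a=\|u\|_{p(x)}$, and invoke (a) in the form $\int_{\Omega}a^{-p(x)}|u(x)|^{p(x)}\,dx=1$. If $a>1$, then $a^{-p^+}\le a^{-p(x)}\le a^{-p^-}$ pointwise, so integrating gives $a^{-p^+}\varphi(u)\le1\le a^{-p^-}\varphi(u)$, i.e. $\|u\|_{p(x)}^{p^-}\le\varphi(u)\le\|u\|_{p(x)}^{p^+}$, which is (c). If instead $a<1$, the pointwise inequalities reverse, yielding $a^{-p^+}\varphi(u)\ge1\ge a^{-p^-}\varphi(u)$ and hence $\|u\|_{p(x)}^{p^+}\le\varphi(u)\le\|u\|_{p(x)}^{p^-}$, which is (d).

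Finally, (e) and (f) are consequences of (b)--(d). For (e): if $\|u_n\|_{p(x)}\to0$ then eventually $\|u_n\|_{p(x)}<1$ and (d) gives $\varphi(u_n)\le\|u_n\|_{p(x)}^{p^-}\to0$; conversely, if $\varphi(u_n)\to0$ then eventually $\varphi(u_n)<1$, so $\|u_n\|_{p(x)}<1$ by (b) and $\|u_n\|_{p(x)}^{p^+}\le\varphi(u_n)$ by (d), whence $\|u_n\|_{p(x)}\to0$. Part (f) is proved symmetrically, using $\|u_n\|_{p(x)}>1$ for large $n$, (b), and the bounds $\|u_n\|_{p(x)}^{p^-}\le\varphi(u_n)\le\|u_n\|_{p(x)}^{p^+}$ from (c), together with $p^-,p^+\in(1,\infty)$. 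The only step requiring genuine care is the continuity and monotonicity of $\rho_u$ underlying (a) — specifically, exhibiting the integrable dominating function, which is exactly where $p^+<\infty$ enters; once (a) is in place, (b)--(f) are routine algebra with the bounds $a^{-p^+}\le a^{-p(x)}\le a^{-p^-}$.
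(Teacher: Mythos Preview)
The paper does not supply its own proof of this lemma: it is quoted as a known result from Fan--Zhao \cite{fan}, so there is nothing in the paper to compare your argument against. Your proof is correct and is essentially the standard one given in the variable-exponent literature (monotonicity and continuity of $\lambda\mapsto\varphi(u/\lambda)$ to pin down the Luxemburg norm, then the pointwise sandwich $a^{-p^+}\le a^{-p(x)}\le a^{-p^-}$ or its reverse to obtain the modular--norm inequalities). One cosmetic remark: the lemma as stated in the paper allows $a\neq 0$, but the equivalence in (a) is only meaningful for $a>0$ (for $a<0$ one has $\varphi(u/a)=\varphi(u/|a|)$, so the right-hand side can hold while $\|u\|_{p(x)}=a$ cannot); your restriction to $a>0$ is the correct reading.
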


  Similarly to Lemma \ref{lemma2}, we have the following result.

\begin{lemma} [Fan--Zhao \cite{fan}] \label{lemma6}
  Let $\Phi (u) = \int_{\Omega} (|\nabla u(x)|^{p(x)}+|u(x)|^{p(x)})dx$
  for $u \in W^{1,p(x)}(\Omega)$ and let $\{u_n\}_{n \geqslant 1} \subseteq W^{1,p(x)}(\Omega)$.
  Then

\noindent (a) for $a \neq 0$, we have
\begin{center}$\|u\|=a \; \Longleftrightarrow \; \Phi (\frac{u}{a})=1$;\end{center}

\noindent (b) we have
\begin{center}$\|u\|<1 \; \Longleftrightarrow \; \Phi (u) <1$;\end{center}

\begin{center}$\|u\|=1 \; \Longleftrightarrow \; \Phi (u) =1$;\end{center}

\begin{center}$\|u\|>1 \; \Longleftrightarrow \; \Phi (u)>1$;\end{center}

\noindent (c) if $\|u\|>1$, then
\begin{center} $\|u\|^{p^-} \leqslant \Phi (u) \leqslant  \|u\|^{p^+}$;\end{center}

\noindent (d) if $\|u\|<1$, then
\begin{center}$\|u\|^{p^+} \leqslant \Phi (u) \leqslant  \|u\|^{p^-}$;\end{center}

\noindent (e) we have
\begin{center}$\lim\limits_{n \rightarrow \infty} \|u_n\|=0 \Longleftrightarrow \lim\limits_{n \rightarrow \infty} \Phi (u_n) =0$;\end{center}

\noindent (f) we have
\begin{center}$\lim\limits_{n \rightarrow \infty} \|u_n\| = \infty \Longleftrightarrow \lim\limits_{n \rightarrow \infty} \Phi (u_n) = \infty$.\end{center}

\end{lemma}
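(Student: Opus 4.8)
The plan is to repeat, nearly verbatim, the argument behind Lemma~\ref{lemma2}, with the Lebesgue modular $\varphi(u)=\int_\Omega|u(x)|^{p(x)}\,dx$ replaced by $\Phi(u)=\int_\Omega\big(|\nabla u(x)|^{p(x)}+|u(x)|^{p(x)}\big)\,dx$ on $W^{1,p(x)}(\Omega)$ and with $\|\cdot\|$ read as the Luxemburg-type norm $\inf\{\lambda>0:\Phi(u/\lambda)\le1\}$ attached to $\Phi$ — this being equivalent to the norm $\|\cdot\|_{W^{1,p(x)}(\Omega)}$ of Section~\ref{Prelim}, a point I return to at the end. The only structural facts I would invoke are: $\Phi$ is finite-valued and continuous on $W^{1,p(x)}(\Omega)$ with $\Phi(0)=0$; from the identity $\Phi(tu)=\int_\Omega t^{p(x)}\big(|\nabla u(x)|^{p(x)}+|u(x)|^{p(x)}\big)\,dx$ one reads off the scaling bounds $t^{p^-}\Phi(u)\le\Phi(tu)\le t^{p^+}\Phi(u)$ when $t\ge1$ and $t^{p^+}\Phi(u)\le\Phi(tu)\le t^{p^-}\Phi(u)$ when $0<t\le1$; and for fixed $u\neq0$ the map $t\mapsto\Phi(tu)$ is continuous, strictly increasing on $(0,\infty)$, and runs from $0$ (as $t\to0^+$) to $\infty$ (as $t\to\infty$).

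For item (a) I would fix $u\neq0$ and $a\neq0$, assuming $a>0$ since $\Phi$ sees only $|u|$, and observe that $g(t):=\Phi(u/t)$ is continuous and strictly decreasing on $(0,\infty)$, from $\infty$ down to $0$; hence there is a unique $t_0$ with $g(t_0)=1$, so $\{\lambda>0:\Phi(u/\lambda)\le1\}=[t_0,\infty)$ and therefore $\|u\|=t_0$, which gives $\|u\|=a\iff\Phi(u/a)=1$ (for $u=0$ both sides fail). For items (b)--(d) I would keep $u\neq0$, put $t_0=\|u\|$ so that $\Phi(u/t_0)=1$ by (a), and, since $\Phi(u)=g(1)$ with $g$ strictly decreasing and $g(t_0)=1$, comparing $g(1)$ with $g(t_0)$ yields (b) at once; then, writing $u=t_0v$ with $v=u/t_0$ and $\Phi(v)=1$, the scaling bounds with $t=t_0$ give $\|u\|^{p^-}\le\Phi(u)\le\|u\|^{p^+}$ when $\|u\|>1$ (item (c)) and $\|u\|^{p^+}\le\Phi(u)\le\|u\|^{p^-}$ when $\|u\|<1$ (item (d)), the case $u=0$ being trivial throughout.

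Items (e) and (f) then follow formally: if $\|u_n\|\to0$ then eventually $\|u_n\|<1$ and $\Phi(u_n)\le\|u_n\|^{p^-}\to0$ by (d), while if $\Phi(u_n)\to0$ then eventually $\Phi(u_n)<1$, so $\|u_n\|<1$ by (b) and $\|u_n\|^{p^+}\le\Phi(u_n)\to0$ by (d); and (f) is the mirror image using (c) in place of (d), since $\|u_n\|\to\infty$ forces $\|u_n\|>1$ eventually, whence $\Phi(u_n)\ge\|u_n\|^{p^-}\to\infty$, and $\Phi(u_n)\to\infty$ forces $\|u_n\|>1$ eventually (by (b)), whence $\|u_n\|^{p^+}\ge\Phi(u_n)\to\infty$.

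The only delicate point — more bookkeeping than a genuine obstacle — is the identification of norm with modular used in item (a): the norm $\|u\|=\|u\|_{p(x)}+\|\nabla u\|_{p(x)}$ declared in Section~\ref{Prelim} is only \emph{equivalent} to, not equal to, the Luxemburg-type norm attached to $\Phi$, so items (a), (c) and (d) must be read with $\|\cdot\|$ denoting the latter (this is the convention of Fan--Zhao \cite{fan}), whereas items (b), (e), (f) are qualitative and hence insensitive to the choice among equivalent norms. Once this identification is granted, the entire proof is the one-variable monotonicity argument already carried out for Lemma~\ref{lemma2}.
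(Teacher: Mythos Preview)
Your argument is correct and is the standard modular--Luxemburg norm proof. The paper itself does \emph{not} prove Lemma~\ref{lemma6}: it is quoted as a known result from Fan--Zhao \cite{fan} (just as Lemma~\ref{lemma2} is), so there is no ``paper's own proof'' to compare against. Your remark about the mismatch between the norm $\|u\|=\|u\|_{p(x)}+\|\nabla u\|_{p(x)}$ declared in Section~\ref{Prelim} and the Luxemburg norm attached to $\Phi$ is well taken: the exact equalities in (a), (c), (d) hold only for the latter, and this is indeed the convention intended in \cite{fan}.
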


  Consider the following function
\[
    J(u)=\int_{\Omega} \frac{1}{p(x)}|\nabla u|^{p(x)} dx, \qquad \textrm{for all } u \in W_0^{1,p(x)}(\Omega).
\]
  We know that $J \in \mathcal{C}^1 (W_0^{1,p(x)}(\Omega))$ and $-\textrm{div}(|\nabla u|^{p(x)-2} \nabla u)$
  is the derivative operator of $J$ in the weak sense (see Chang \cite{chang}).
  We denote
\[
  A=J': W_0^{1,p(x)}(\Omega) \rightarrow (W_0^{1,p(x)}(\Omega))^*,
\]
  then
\begin{equation} \label{operator}
  \langle Au,v\rangle = \int_{\Omega} |\nabla u(x)|^{p(x)-2} (\nabla u(x), \nabla v(x)) dx
  \end{equation}
for all  $u, v \in W_0^{1,p(x)}(\Omega).$

\begin{lemma}[Fan--Zhang \cite{zhang}] \label{lemma3}
  If A is the operator defined above, then $A$ is a continuous,
  bounded and strictly monotone operator of type $(S)_+$ i.e.,\\
   $u_n \rightarrow u$ weakly in $W_0^{1,p(x)}(\Omega)$ and
  $\limsup\limits_{n \rightarrow \infty} \langle Au_n, u_n -u \rangle \leqslant 0$
  implies that $u_n \rightarrow u$ in  $W_0^{1,p(x)}(\Omega)$.
\end{lemma}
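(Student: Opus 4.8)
The plan is to establish in turn the four properties claimed for $A$ — boundedness, strict monotonicity, continuity, and the $(S)_+$ condition — using throughout the modular--norm relations of Lemma~\ref{lemma2} and the generalized H\"older inequality of Lemma~\ref{fan}(c); recall that $A=J'$ with $J\in\mathcal C^1$ is already given above. For \emph{boundedness}, given $u,v\in W_0^{1,p(x)}(\Omega)$ I would estimate $|\langle Au,v\rangle|\le\int_\Omega|\nabla u|^{p(x)-1}|\nabla v|\,dx$ and apply Lemma~\ref{fan}(c) with the conjugate pair $(p(x),p'(x))$, obtaining $|\langle Au,v\rangle|\le C\,\big\||\nabla u|^{p(x)-1}\big\|_{p'(x)}\,\|\nabla v\|_{p(x)}$; computing the modular of $|\nabla u|^{p(x)-1}$ in $L^{p'(x)}(\Omega)$ and invoking Lemma~\ref{lemma2}(c)--(d) shows $\big\||\nabla u|^{p(x)-1}\big\|_{p'(x)}$ is controlled by a power of $\|\nabla u\|_{p(x)}$, so that $\|Au\|_*$ is bounded on bounded subsets of $W_0^{1,p(x)}(\Omega)$ (using also the Poincar\'e inequality of Lemma~\ref{fan}(b) to pass between $\|\nabla v\|_{p(x)}$ and $\|v\|$).

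\emph{Strict monotonicity} and \emph{continuity} both rest on the classical pointwise inequalities for the vector field $\xi\mapsto|\xi|^{r-2}\xi$ on $\mathbb R^N$: for $r\ge2$ one has $(|\xi|^{r-2}\xi-|\eta|^{r-2}\eta,\,\xi-\eta)\ge 2^{2-r}|\xi-\eta|^r$, and for $1<r<2$ one has $(|\xi|^{r-2}\xi-|\eta|^{r-2}\eta,\,\xi-\eta)\ge (r-1)|\xi-\eta|^2(|\xi|+|\eta|)^{r-2}$, with the left-hand side strictly positive whenever $\xi\ne\eta$. Applying this with $r=p(x)$ pointwise shows the integrand of $\langle Au-Av,u-v\rangle$ is nonnegative and strictly positive on $\{x:\nabla u(x)\ne\nabla v(x)\}$, hence the integral is positive unless $\nabla u=\nabla v$ a.e., i.e.\ unless $u=v$ in $W_0^{1,p(x)}(\Omega)$. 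For continuity, if $u_n\to u$ in $W_0^{1,p(x)}(\Omega)$ then $\nabla u_n\to\nabla u$ in $L^{p(x)}(\Omega;\mathbb R^N)$; along a subsequence one has a.e.\ convergence together with an $L^{p(x)}$-dominating majorant, and a Vitali-type argument (via the modular characterization of convergence in $L^{p'(x)}$) gives $|\nabla u_n|^{p(x)-2}\nabla u_n\to|\nabla u|^{p(x)-2}\nabla u$ in $L^{p'(x)}(\Omega;\mathbb R^N)$, whence $\|Au_n-Au\|_*\to0$ by H\"older; since every subsequence admits a further subsequence along which this holds, the whole sequence converges, so $A$ is continuous.

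Finally, the \emph{$(S)_+$ condition}. Assume $u_n\rightharpoonup u$ weakly in $W_0^{1,p(x)}(\Omega)$ and $\limsup_{n\to\infty}\langle Au_n,u_n-u\rangle\le0$. Since $Au\in(W_0^{1,p(x)}(\Omega))^*$ and $u_n-u\rightharpoonup0$, we have $\langle Au,u_n-u\rangle\to0$, so $\limsup_n\langle Au_n-Au,u_n-u\rangle\le0$; combined with monotonicity this forces $\langle Au_n-Au,u_n-u\rangle\to0$, i.e.\ $\int_\Omega\mathcal E_n(x)\,dx\to0$, where $\mathcal E_n(x):=(|\nabla u_n|^{p(x)-2}\nabla u_n-|\nabla u|^{p(x)-2}\nabla u,\,\nabla u_n-\nabla u)\ge0$. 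I would then split $\Omega=\Omega_1\cup\Omega_2$ with $\Omega_1=\{x:p(x)\ge2\}$ and $\Omega_2=\{x:1<p(x)<2\}$. On $\Omega_1$ the first pointwise inequality yields $|\nabla u_n-\nabla u|^{p(x)}\le 2^{p^+-2}\mathcal E_n$, hence $\int_{\Omega_1}|\nabla u_n-\nabla u|^{p(x)}\,dx\to0$. On $\Omega_2$, writing $|\nabla u_n-\nabla u|^{p(x)}$ as the product of $\big(|\nabla u_n-\nabla u|^2(|\nabla u_n|+|\nabla u|)^{p(x)-2}\big)^{p(x)/2}$ and $(|\nabla u_n|+|\nabla u|)^{(2-p(x))p(x)/2}$ and applying H\"older with the variable conjugate pair $(\tfrac{2}{p(x)},\tfrac{2}{2-p(x)})$, the first factor is controlled through $\int_{\Omega_2}\mathcal E_n\to0$ and the second through the modular $\int_{\Omega_2}(|\nabla u_n|+|\nabla u|)^{p(x)}\,dx$, which stays bounded because $\{u_n\}$ is bounded; this gives $\int_{\Omega_2}|\nabla u_n-\nabla u|^{p(x)}\,dx\to0$. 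Adding the two pieces and using Lemma~\ref{lemma2}(e) gives $\|\nabla u_n-\nabla u\|_{p(x)}\to0$, and Poincar\'e then yields $u_n\to u$ in $W_0^{1,p(x)}(\Omega)$; uniqueness of the limit removes the passage to a subsequence. The step I expect to be the main obstacle is precisely this $\Omega_2$ estimate — carrying the variable exponent correctly through H\"older's inequality and keeping the modular of $(|\nabla u_n|+|\nabla u|)^{p(x)}$ under control — all the other arguments being routine once the modular estimates of Lemma~\ref{lemma2} are in hand.
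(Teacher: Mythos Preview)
The paper does not include its own proof of this lemma; it is quoted as a known result from Fan--Zhang \cite{zhang}. Your proof sketch is correct and follows essentially the standard argument from that reference: boundedness via the generalized H\"older inequality and the modular estimates of Lemma~\ref{lemma2}; strict monotonicity from the classical pointwise inequalities for $\xi\mapsto|\xi|^{r-2}\xi$; and the $(S)_+$ property via the decomposition $\Omega=\{p(x)\ge2\}\cup\{1<p(x)<2\}$, handling the singular region with the variable-exponent H\"older inequality for the conjugate pair $\bigl(\tfrac{2}{p(x)},\tfrac{2}{2-p(x)}\bigr)$ exactly as you describe. The $\Omega_2$ estimate you flag as the main obstacle is indeed the only nontrivial step, and your treatment of it is the standard one.

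Two minor observations. First, since the paper already records $J\in\mathcal C^1(W_0^{1,p(x)}(\Omega))$ and $A=J'$, continuity of $A$ is immediate and your separate Vitali-type argument, while correct, is not needed. Second, no subsequence is actually taken anywhere in your $(S)_+$ argument: once monotonicity gives $\langle Au_n-Au,u_n-u\rangle\ge0$ and the hypothesis gives $\limsup\le0$, the full sequence $\int_\Omega\mathcal E_n\,dx$ tends to zero, so the closing remark about ``uniqueness of the limit removing the passage to a subsequence'' can be dropped.
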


  Let ($X$, $\| \cdot \|$) be a Banach space and $X^*$ its topological dual.
A function $f:X\rightarrow\mathbb{R}$ is said to be locally Lipschitz,
  if for every $x\in X$ there exists a neighbourhood  $U$ of $x$
  and a constant $K>0$ depending on $U$ such that
  $|f(y)-f(z)| \leqslant K\|y-z\|$ for all $y,z \in U$.
  From convex analysis it is well know that a proper, convex and lower
  semicontinuous function
  $g: X \rightarrow \overline{\mathbb{R}}=\mathbb{R}\cup \{ + \infty\}$
  is locally Lipschitz in the interior of its domain $\textrm{dom} g=\{x \in X:g(x)< \infty\}$.

 In analogy with the directional derivative of a convex function,
  we introduce the notion of the generalized directional derivative of a locally Lipschitz function $f$ at $x \in X$ in the direction $h \in X$
  by
\[
    f^0 (x;h)=\limsup_{y \rightarrow x, \lambda \searrow 0} \frac{f(y+\lambda h)-f(y)}{\lambda}.
\]
  The function $h \longmapsto f^0 (x,h) \in \mathbb{R}$ is sublinear and 
  continuous so it is the support function of a nonempty, $w^*$--compact and convex set
\[
    \partial f(x)=\{x^* \in X^*: \langle x^*,h \rangle \leqslant f^0 (x,h) \textrm{ for all } h \in X\}.
\]
  The set $\partial f(x)$ is known as generalized or Clarke subdifferential of $f$ at
  $x$. If $f$ is convex, then the subdifferential in the sense of convex analysis 
  coincides with the generalized subdifferential introduced above. 

  The critical point theory for smooth functions uses a compactness condition known 
  as "Cerami condition" (C--condition for short). In our present nonsmooth 
  settings, the condition takes the following form.
	
	\medskip

    We say that $f$ satisfies the "nonsmooth Cerami condition" (nonsmooth C--condition for short),
  if any sequence $\{x_n\}_{n \geqslant 1} \subseteq X$ such that $\{f(x_n)\}_{n \geqslant 1}$
  is bounded and $(1+\|x_n\|)m(x_n)\rightarrow 0$ as $n \rightarrow \infty$, where $m(x_n)=\min\{ \|x^*\|_* :x^* \in \partial f(x_n)\}$, has a strongly convergent subsequence.
	
	\medskip

  The first theorem is due to Chang \cite{changg} and extends to
  a nonsmooth setting the well known mountain pass theorem due
  to Ambrosetti--Rabinowitz \cite{ambro}.

\begin{theorem} \label{twierdzenie}
  If $X$ is a reflexive Banach space, $R:X \rightarrow \mathbb{R}$ is
  a locally Lipschitz functional satisfying C--condition and for some
  $\rho>0$ and $y \in X$ such that $\|y\|>\rho$, we have
\[
    \max\{R(0),R(y)\}<\inf\limits_{\|x\|=\rho} \{R(x)\}=: \eta,
\]
   then R has a nontrivial critical point $x \in X$ such that
   the critical value $c=R(x) \geqslant \eta$ is characterized by the following minimax principle
\[
    c=\inf\limits_{\gamma \in \Gamma}\max\limits_{0 \leqslant \tau \leqslant 1}\{R(\gamma(\tau))\},
\]
  where $\Gamma=\{\gamma \in \mathcal{C}([0,1],X):\gamma(0)=0,\gamma(1)=y\}$.
\end{theorem}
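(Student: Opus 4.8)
\medskip
\noindent\textbf{Proof (plan).}
The plan is to run the Ambrosetti--Rabinowitz mountain pass scheme in the nonsmooth Cerami setting. Put $c:=\inf_{\gamma\in\Gamma}\max_{0\leqslant\tau\leqslant1}R(\gamma(\tau))$; since $\tau\mapsto\tau y$ lies in $\Gamma$ we have $c<\infty$. Because $R$ is continuous (locally Lipschitz) and every $\gamma\in\Gamma$ joins $0$ (with $\|0\|=0<\rho$) to $y$ (with $\|y\|>\rho$), the continuous map $\tau\mapsto\|\gamma(\tau)\|$ attains the value $\rho$ at some $\tau_0$, so $\gamma(\tau_0)$ lies on the sphere $\{\|x\|=\rho\}$ and $\max_\tau R(\gamma(\tau))\geqslant R(\gamma(\tau_0))\geqslant\eta$; taking the infimum over $\Gamma$ gives $c\geqslant\eta>\max\{R(0),R(y)\}$. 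Thus the candidate critical value is already located strictly above the endpoint values.

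I would then recall the nonsmooth calculus facts that make the deformation machinery run: the number $m(x)=\min\{\|x^*\|_*:x^*\in\partial R(x)\}$ is attained (because $\partial R(x)$ is convex and $w^*$--compact and the dual norm is $w^*$--lower semicontinuous), $x\mapsto m(x)$ is lower semicontinuous (since $x\mapsto\partial R(x)$ is $w^*$--upper semicontinuous with locally bounded values), $x$ is a critical point exactly when $m(x)=0$, and along any Lipschitz curve $\sigma$ one has $\frac{d}{dt}R(\sigma(t))\leqslant R^0(\sigma(t);\dot\sigma(t))$ for a.e.\ $t$. I will also use Chang's locally Lipschitz pseudo--gradient lemma: if $(1+\|x\|)m(x)\geqslant\delta>0$ on a strip $S=\{|R(x)-c|\leqslant\bar\varepsilon\}$, then — covering $S$ by small patches $U_i$ centred at points $x_i\in S$, choosing on each (via Sion's minimax theorem for the pairing between the closed unit ball of $X$ and the convex $w^*$--compact set $\partial R(x)$, together with the $w^*$--upper semicontinuity of $\partial R$) a direction $v_i$ with $\|v_i\|\leqslant1$ and $\langle x^*,v_i\rangle\geqslant\tfrac12 m(x_i)$ for all $x^*\in\partial R(y)$, $y\in U_i$, and gluing by a locally Lipschitz partition of unity — one obtains a locally Lipschitz field $\widehat V$ defined near $S$ with $\|\widehat V(x)\|\leqslant1+\|x\|$ and $R^0(x;-\widehat V(x))\leqslant-c_0\delta$ on $S$ for a fixed constant $c_0>0$ (the factor $1+\|x\|$ is inserted precisely so that $(1+\|x_i\|)m(x_i)\geqslant\delta$ becomes a uniform descent rate).

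The heart of the argument is then a contradiction via deformation. Suppose $R$ has no critical point at level $c$. The nonsmooth Cerami condition yields $\bar\varepsilon,\delta>0$ with $(1+\|x\|)m(x)\geqslant\delta$ on $S=\{|R(x)-c|\leqslant\bar\varepsilon\}$ — otherwise a sequence in $S$ would violate the C--condition, while its strong limit would be a critical point at level $c$ by continuity of $R$ and lower semicontinuity of $m$ — and, shrinking $\bar\varepsilon$, I may assume $\bar\varepsilon<c-\max\{R(0),R(y)\}$. Take $0<\varepsilon<\bar\varepsilon$, a locally Lipschitz cutoff $\chi:X\to[0,1]$ with $\chi\equiv1$ on $\{|R-c|\leqslant\varepsilon\}$ and $\chi\equiv0$ off $\{|R-c|\leqslant\bar\varepsilon\}$ (a ratio of distances to two disjoint closed sets), and the field $\widehat V$ above, and solve $\dot\sigma(t,x)=-\chi(\sigma)\widehat V(\sigma)$, $\sigma(0,x)=x$. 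Since $\|\chi(\sigma)\widehat V(\sigma)\|\leqslant1+\|\sigma\|$ grows at most linearly, $\sigma$ exists for all $t\geqslant0$; moreover $t\mapsto R(\sigma(t,x))$ is non-increasing and its derivative is $\leqslant-c_0\delta$ as long as $\sigma(t,x)\in\{|R-c|\leqslant\varepsilon\}$. Hence for $T$ large enough the map $h:=\sigma(T,\cdot)$ satisfies $h(\{R\leqslant c+\varepsilon\})\subseteq\{R\leqslant c-\varepsilon\}$ and $h=\mathrm{id}$ off $\{|R-c|\leqslant\bar\varepsilon\}$, in particular $h(0)=0$ and $h(y)=y$. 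Choosing $\gamma\in\Gamma$ with $\max_\tau R(\gamma(\tau))<c+\varepsilon$, the path $h\circ\gamma$ again belongs to $\Gamma$ and has $\max_\tau R(h(\gamma(\tau)))\leqslant c-\varepsilon<c$, contradicting the definition of $c$. So $R$ does have a critical point $x$ at level $c$; since $R(x)=c\geqslant\eta>R(0)$ we get $x\neq0$, and $c$ is characterised by the stated minimax formula by construction.

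I expect the main obstacle to be the deformation step itself, i.e.\ producing a globally defined descending flow out of a merely locally Lipschitz functional under the \emph{weighted} Cerami condition rather than Palais--Smale. The delicate points are: (i) the pseudo--gradient direction must decrease $R$ against the \emph{entire} set $\partial R(x)$, not just one selected subgradient — this is exactly where the $w^*$--compactness of $\partial R(x)$ and the minimax/separation argument are indispensable; (ii) the factor $1+\|x\|$ must be inserted so that $(1+\|x\|)m(x)\geqslant\delta$ turns into a uniform rate of descent over a fixed time horizon while the flow still cannot blow up in finite time; and (iii) the cutoff near the boundary of the regular set $\{m>0\}$ has to be arranged so that the truncated vector field is genuinely locally Lipschitz on all of $X$. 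Once the deformation is available the sphere--crossing step and the final contradiction are routine; this is the content of Chang's theorem \cite{changg}, which we use here in the stated form.
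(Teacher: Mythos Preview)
The paper does not prove this theorem at all: it is quoted as a known result and attributed to Chang \cite{changg} (see the sentence ``The first theorem is due to Chang \cite{changg} and extends to a nonsmooth setting the well known mountain pass theorem due to Ambrosetti--Rabinowitz \cite{ambro}''), so there is no in-paper proof to compare against.

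That said, your plan is the standard deformation proof in Chang's spirit and is essentially sound. The sphere--crossing argument giving $c\geqslant\eta$ is correct; the key nonsmooth ingredients (attainment of $m(x)$, lower semicontinuity of $m$, the chain rule $\frac{d}{dt}R(\sigma(t))\leqslant R^0(\sigma(t);\dot\sigma(t))$) are accurately identified; and the deformation argument with the linear growth bound $\|\widehat V(x)\|\leqslant 1+\|x\|$ is precisely the right adaptation to the Cerami rather than Palais--Smale condition. Two small remarks: (i) your use of Sion's minimax theorem to produce the uniform descent direction is legitimate because $\partial R(x)$ is $w^*$--compact and convex (only one side needs compactness), though the more common route is a direct separation/Hahn--Banach argument on the convex $w^*$--compact set $\partial R(x)$; (ii) reflexivity of $X$ is not actually used anywhere in your argument --- and indeed Chang's theorem holds in general Banach spaces --- so the hypothesis in the statement is superfluous for the proof you outline. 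None of this affects correctness; your sketch would serve as a proof where the paper simply cites one.
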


  The second theory is an other nonsmooth version of mountain pass theorem.

\begin{theorem}\label{twierdzenie2}
  If $X$ is a reflexive Banach space and $R: X \rightarrow \mathbb{R}$ is a bounded 
  below and locally Lipschitz functional which satisfies nonsmooth C--condition, then
  $c=\inf \{R(x):x \in X \}$ is a critical value of $R$.
\end{theorem}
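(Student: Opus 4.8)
The plan is to realise $c=\inf_X R$ as a critical value by constructing a nonsmooth Cerami sequence at level $c$ and then invoking the hypothesis. Note first that $c\in\mathbb{R}$ because $R$ is bounded below; if the infimum happened to be attained at some $x_0$, the nonsmooth Fermat rule would already give $0\in\partial R(x_0)$ and we would be done, so the real content is the case in which the infimum is only approached.

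Step 1 (producing the Cerami sequence). For each $n\in\mathbb{N}$ choose $w_n\in X$ with $R(w_n)\le c+\tfrac1n$ and set $\lambda_n:=n\,(1+\|w_n\|)>0$. Applying Ekeland's variational principle, in its strong form with a free positive parameter, to the continuous and bounded below functional $R$ on the complete space $(X,\|\cdot\|)$ with $\varepsilon=\tfrac1n$ and $\lambda_n$, yields $x_n\in X$ such that $R(x_n)\le R(w_n)\le c+\tfrac1n$, $\|x_n-w_n\|\le\lambda_n$, and
\[
  R(y)\ \ge\ R(x_n)-\frac{1}{n\lambda_n}\,\|y-x_n\|\qquad\text{for all }y\in X .
\]
Taking $y=x_n+th$ with $\|h\|\le1$ and $t>0$, dividing by $t$ and letting $t\searrow0$, gives $R^0(x_n;h)\ge-\tfrac{1}{n\lambda_n}$ for every $h$ in the unit ball of $X$. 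Since $h\mapsto R^0(x_n;h)$ is precisely the support function of the nonempty, convex, $w^*$--compact set $\partial R(x_n)$, this forces $\partial R(x_n)$ to meet the ball $\{\xi\in X^*:\|\xi\|_*\le\tfrac{1}{n\lambda_n}\}$: if the two were disjoint, a Hahn--Banach separation in $(X^*,w^*)$ would produce some $h_0\in X$ with $R^0(x_n;h_0)=\sup_{\xi\in\partial R(x_n)}\langle\xi,h_0\rangle<-\tfrac{1}{n\lambda_n}\|h_0\|$, contradicting the previous bound after normalising $h_0$. Hence $m(x_n)\le\tfrac{1}{n\lambda_n}$, and therefore
\[
  (1+\|x_n\|)\,m(x_n)\ \le\ \frac{1+\|w_n\|+\lambda_n}{n\lambda_n}\ =\ \frac{1+\|w_n\|}{n\lambda_n}+\frac1n\ =\ \frac1{n^2}+\frac1n\ \longrightarrow\ 0 .
\]
Together with $R(x_n)\to c$, this shows $\{x_n\}$ is a nonsmooth Cerami sequence at level $c$.

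Step 2 (passing to the limit). By the nonsmooth C--condition some subsequence $x_{n_k}\to x_0$ in $X$, and continuity of $R$ gives $R(x_0)=c$. Choose $\xi_k\in\partial R(x_{n_k})$ with $\|\xi_k\|_*=m(x_{n_k})$ — the minimum is attained since $\partial R(x_{n_k})$ is $w^*$--compact and the dual norm is $w^*$--lower semicontinuous — so that $\|\xi_k\|_*\le(1+\|x_{n_k}\|)m(x_{n_k})\to0$, i.e. $\xi_k\to0$ in $X^*$. Using the upper semicontinuity of $(x,h)\mapsto R^0(x;h)$, for each $h\in X$ we get $0=\lim_k\langle\xi_k,h\rangle\le\limsup_k R^0(x_{n_k};h)\le R^0(x_0;h)$, hence $0\in\partial R(x_0)$. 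Thus $x_0$ is a critical point of $R$ with $R(x_0)=c$, so $c$ is a critical value of $R$.

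The one genuinely delicate point is Step 1: a bare application of Ekeland's principle produces only a sequence with $m(x_n)\to0$ — a nonsmooth Palais--Smale sequence, which need not be a Cerami sequence — so the hypothesis could not be applied to it. The repair is the $n$--dependent choice $\lambda_n=n(1+\|w_n\|)$ of the second Ekeland parameter, which ties the slope $\tfrac{1}{n\lambda_n}$ to the size of $\|x_n\|$ (controlled through $\|x_n\|\le\|w_n\|+\lambda_n$) so that the product $(1+\|x_n\|)m(x_n)$ is forced to vanish; alternatively one could invoke Zhong's variational principle directly. Everything else is soft: the separation argument converting the directional estimate into a bound on $m(x_n)$, and the closed--graph property of $\partial R$ used in Step 2, are standard consequences of $R^0(x;\cdot)$ being the support function of $\partial R(x)$ and of $R^0$ being upper semicontinuous.
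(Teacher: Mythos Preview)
The paper does not actually prove this theorem; it is stated in Section~2 as a preliminary result from the literature, without proof or explicit citation, and is simply invoked later in the proof of Theorem~3.4. There is therefore no ``paper's own proof'' to compare your argument against.

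Your argument is correct and is essentially the standard one: Ekeland's variational principle with an $n$--dependent second parameter $\lambda_n=n(1+\|w_n\|)$ (equivalently, Zhong's variational principle) to manufacture a Cerami sequence at level $c$, followed by the nonsmooth C--condition to extract a strongly convergent subsequence, and finally the upper semicontinuity of $R^0$ (closed--graph property of $\partial R$) to place $0$ in $\partial R(x_0)$. The separation argument converting the directional bound $R^0(x_n;h)\ge -\tfrac{1}{n\lambda_n}$ into $m(x_n)\le\tfrac{1}{n\lambda_n}$ is fine, and the arithmetic
\[
(1+\|x_n\|)\,m(x_n)\ \le\ \frac{1+\|w_n\|+\lambda_n}{n\lambda_n}\ =\ \frac{1}{n^2}+\frac{1}{n}
\]
is exactly what is needed. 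One minor observation: reflexivity of $X$ is never used in your proof, and indeed the result holds in any Banach space; the hypothesis in the paper's statement is presumably carried over from the companion mountain pass theorem (Theorem~2.5) rather than genuinely required here.
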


\section{Existence of Solutions}
  We start by introducing our assumptions for the nonsmooth potential $j(x,t)$.\\

\noindent $H(j)$ $\; j:\Omega  \times \mathbb{R} \rightarrow \mathbb{R}$ is a function such that $j(x,0)=0$ a.e. in $\Omega$ and

\medskip

\noindent \textbf{(i)} for all $t \in \mathbb{R}$, the function $\Omega \ni x \rightarrow j(x,t) \in \mathbb{R}$ is measurable;

\smallskip

\noindent \textbf{(ii)} for almost all $x \in \Omega$, the function $\mathbb{R} \ni t \rightarrow j(x,t) \in \mathbb{R}$ is locally Lipschitz;

\smallskip

\noindent \textbf{(iii)} for almost all $x \in \Omega$ and all $v \in \partial j(x,t)$, we have $|v| \leqslant c_1|t|^{r(x)-1}$ with $r \in \mathcal{C}(\overline{\Omega})$
such that $p^+ < r^-:=\min\limits_{x \in \overline{\Omega}} r(x) \leqslant r(x) <\widehat{p}^*$ and $c_1>0$;

\smallskip

\noindent \textbf{(iv)} there exists $c>2 c_1$ such that
\[ \label{war}
  \limsup\limits_{|t|\rightarrow \infty} \frac{v^*t-j(x,t)}{|t|^{r(x)}} \leqslant -c,
\]
  uniformly for almost all $x \in \Omega$ and all $v^*\in \partial j(x, t)$.
	
	\bigskip

  We introduce locally Lipschitz functional $R: W_0^{1,p(x)}(\Omega) \rightarrow \mathbb{R}$ defined by
\[
  R(u)=\int_{\Omega} \frac{1}{p(x)}|\nabla u(x)|^{p(x)}dx -\int_{\Omega}  
  \frac{\lambda}{p(x)}|u(x)|^{p(x)}dx - \int_{\Omega} j(x,u(x))dx,
\]
  for all $u \in W_0^{1,p(x)}(\Omega)$.

\begin{lemma}\label{PS}
If hypothesis $H(j)$ hold, then $R$ satisfies the nonsmooth C--condition.
\end{lemma}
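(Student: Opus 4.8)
The plan is to take a sequence $\{u_n\} \subseteq W_0^{1,p(x)}(\Omega)$ with $|R(u_n)| \leqslant M$ for all $n$ and $(1+\|u_n\|)m(u_n) \to 0$, and show it has a strongly convergent subsequence. First I would extract from the condition $(1+\|u_n\|)m(u_n)\to 0$ an element $u_n^* \in \partial R(u_n) \subseteq (W_0^{1,p(x)}(\Omega))^*$ with $\|u_n^*\|_* = m(u_n)$, which by the sum rule for Clarke subdifferentials has the form $u_n^* = A u_n - \lambda |u_n|^{p(x)-2}u_n - w_n$ where $w_n \in L^{r'(x)}(\Omega)$ with $w_n(x) \in \partial j(x,u_n(x))$ a.e. (so $|w_n(x)| \leqslant c_1|u_n(x)|^{r(x)-1}$ by $H(j)(iii)$). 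The key first step is the \emph{boundedness} of $\{u_n\}$ in $W_0^{1,p(x)}(\Omega)$. To get this, I would combine $\langle u_n^*, u_n\rangle = o(1)$ (from $(1+\|u_n\|)m(u_n)\to 0$ we get $|\langle u_n^*,u_n\rangle| \leqslant \|u_n^*\|_*\|u_n\| \to 0$) with $p^+ R(u_n) \leqslant M p^+$, forming a suitable linear combination. Concretely, $p^+R(u_n) - \langle u_n^*, u_n\rangle$ causes the $\int |\nabla u_n|^{p(x)}$ and $\int \lambda|u_n|^{p(x)}$ terms to nearly cancel (up to the favorable sign coming from $\frac{p^+}{p(x)} \geqslant 1$), leaving essentially $\int_\Omega \big(w_n u_n - p^+ j(x,u_n)\big)\,dx$ plus controlled remainders; hypothesis $H(j)(iv)$, which says $v^*t - j(x,t) \leqslant -c|t|^{r(x)}$ for large $|t|$ with $c > 2c_1$, then forces $\int_\Omega |u_n|^{r(x)}\,dx$ to stay bounded, and via the $L^{r(x)}$-norm estimates in Lemma \ref{lemma2} and the continuous embedding $W_0^{1,p(x)}(\Omega) \hookrightarrow L^{r(x)}(\Omega)$ (Lemma \ref{fan}(a), since $r(x) < \widehat p^*\leqslant p^*(x)$), one feeds this back into $R(u_n)$ to bound $\int|\nabla u_n|^{p(x)}$, hence $\|u_n\|$ by Lemma \ref{lemma6}.

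The main obstacle I expect is precisely this boundedness argument: one must be careful that the weight term $-\int \frac{\lambda}{p(x)}|u_n|^{p(x)}$ has no sign ($\lambda \in \mathbb{R}$ arbitrary), so it cannot simply be dropped. The device is that it is of lower order than $\int|u_n|^{r(x)}$ because $p^+ < r^-$: by Young's inequality with exponents $r(x)/p(x)$ and its conjugate, $|\lambda|\,|u_n|^{p(x)} \leqslant \varepsilon |u_n|^{r(x)} + C_\varepsilon$ pointwise, so after the $r(x)$-integral is controlled the weight term is absorbable. The subquadratic-type growth needs $p^+ j$ versus $w_n u_n$; note $H(j)(iii)$ gives only $|w_n(x)|\,|u_n(x)| \leqslant c_1|u_n|^{r(x)}$ and integrating $|v| \leqslant c_1|t|^{r(x)-1}$ along the segment $[0,t]$ gives $|j(x,t)| \leqslant \frac{c_1}{r^-}|t|^{r(x)}$ (Lebourg mean value theorem), which keeps every term homogeneous of degree $r(x)$ — the constant $c > 2c_1$ in $(iv)$ is exactly what makes the resulting coefficient of $\int|u_n|^{r(x)}$ strictly negative after combining, giving the a priori bound.

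Once $\{u_n\}$ is bounded, reflexivity of $W_0^{1,p(x)}(\Omega)$ gives a subsequence with $u_n \rightharpoonup u$ weakly, and by the compact embedding $W_0^{1,p(x)}(\Omega) \hookrightarrow\hookrightarrow L^{r(x)}(\Omega)$ (Lemma \ref{fan}(a), using $r(x) < \widehat p^*$) we get $u_n \to u$ strongly in $L^{r(x)}(\Omega)$ and hence (along a further subsequence) a.e. with an $L^{r(x)}$ dominating function. Then I would estimate $\langle A u_n, u_n - u\rangle = \langle u_n^*, u_n - u\rangle + \lambda\int_\Omega |u_n|^{p(x)-2}u_n(u_n-u)\,dx + \int_\Omega w_n(u_n-u)\,dx$. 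The first term is $o(1)$ since $\|u_n^*\|_* \to 0$ and $\|u_n - u\|$ is bounded; the third term goes to $0$ by Hölder (Lemma \ref{fan}(c)) since $\|w_n\|_{r'(x)}$ is bounded (by $H(j)(iii)$ and boundedness of $u_n$ in $L^{r(x)}$) while $\|u_n - u\|_{r(x)} \to 0$; the second term goes to $0$ similarly because $|u_n|^{p(x)-2}u_n$ is bounded in $L^{(p(x))'}$ and $u_n - u \to 0$ in $L^{p(x)}$ (again the compact embedding, as $p(x) < \widehat p^*$). Therefore $\limsup_n \langle A u_n, u_n - u\rangle \leqslant 0$, and the $(S)_+$ property of $A$ from Lemma \ref{lemma3} yields $u_n \to u$ strongly in $W_0^{1,p(x)}(\Omega)$, completing the proof.
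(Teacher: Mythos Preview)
Your plan follows the paper's proof closely: establish boundedness of $\{u_n\}$ by combining the bound on $R(u_n)$ with $\langle u_n^*,u_n\rangle=o(1)$ and invoking $H(j)(iv)$, then pass to a weakly convergent subsequence and conclude strong convergence via the $(S)_+$ property of $A$ (Lemma~\ref{lemma3}). Your device of absorbing the sign-indefinite weight term $|\lambda|\int|u_n|^{p(x)}$ into $\int|u_n|^{r(x)}$ by Young's inequality (exploiting $p^+<r^-$) is a clean streamlining of the paper's argument, which instead splits into the two cases $\lambda\leqslant 0$ and $\lambda>0$ and handles each separately; your route avoids that case distinction at no cost.

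One point needs adjustment. The specific linear combination you propose, $p^+R(u_n)-\langle u_n^*,u_n\rangle$, produces a gradient remainder $\int_\Omega\big(\tfrac{p^+}{p(x)}-1\big)|\nabla u_n|^{p(x)}\,dx\geqslant 0$, and this sign is incompatible with the direction of the estimate coming from $H(j)(iv)$: that hypothesis gives only an \emph{upper} bound $v^*t-j(x,t)\leqslant -\tfrac{c}{2}|t|^{r(x)}+\beta$, so to extract a bound on $\int|u_n|^{r(x)}$ you must work from the \emph{lower} bound $-M-\varepsilon_n\leqslant (\text{combination})$ and need the gradient remainder to be $\leqslant 0$. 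The paper therefore uses $R(u_n)-\langle u_n^*,u_n\rangle$ (equivalently any multiplier $\alpha\leqslant p^-$), which makes the gradient coefficient $\tfrac{1}{p(x)}-1<0$; then
\[
-M-\varepsilon_n\ \leqslant\ \Big(\tfrac{1}{p^-}-1\Big)\!\int_\Omega|\nabla u_n|^{p(x)}+\lambda_+\Big(1-\tfrac{1}{p^+}\Big)\!\int_\Omega|u_n|^{p(x)}-\tfrac{c}{2}\!\int_\Omega|u_n|^{r(x)}+C,
\]
and your Young-inequality absorption of the middle term finishes the boundedness of $\int|u_n|^{r(x)}$, after which the gradient bound and the $(S)_+$ step go through exactly as you outline.
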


\begin{proof}

  Let $\{u_n\}_{n \geqslant 1} \subseteq W_0^{1,p(x)}(\Omega)$ be a sequence such that 
  $\{R(u_n)\}_{n \geqslant 1}$ is bounded and $m(u_n) \rightarrow 0$ as $n \rightarrow \infty.$
  We will show that $\{u_n\}_{n \geqslant 1} \subseteq W_0^{1,p(x)}(\Omega)$ is bounded.

  Because $|R(u_n)|\leqslant M$ for all $n \geqslant 1$, we have
\begin{equation} \label{1111}
  -M \leqslant \int_{\Omega} \frac{1}{p(x)} |\nabla u_n (x)|^{p(x)} dx-\int_{\Omega} 
  \frac{\lambda}{p(x)}|u_n (x)|^{p(x)}dx - \int_{\Omega} j(x,u_n (x))dx.
\end{equation}

  Since $\partial R(u_n) \subseteq (W_0^{1,p(x)}(\Omega))^*$ is weakly compact, 
  nonempty and the norm functional is weakly lower semicontinuous in a Banach space,
  then we can find $u_n^* \in \partial R(u_n)$ such that $\|u_n^*\|_*=m(u_n)$ for $n \geqslant 1$.

  Consider the operator $A:W_0^{1,p(x)}(\Omega) \rightarrow (W_0^{1,p(x)}(\Omega))^*$
  defined by (\ref{operator}). Then, for every $n \geqslant 1$, we have
\begin{equation}\label{11ss}
  u_n^*=Au_n-\lambda |u_n|^{p(x)-2} u_n - v_n^*,
\end{equation}
  where $v_n^* \in \partial \psi (u_n)\subseteq L^{p'(x)} (\Omega)$, for $n \geqslant 1$,
  with $\frac{1}{p(x)}+\frac{1}{p'(x)}=1$ and $\psi: W_0^{1,p(x)}(\Omega) \rightarrow \mathbb{R}$ is defined by $\psi (u_n)=\int\limits_{\Omega} j(x,u_n(x)) dx$. We know 
  that, if $v_n^* \in \partial \psi (u_n)$, then $v_n^*(x) \in \partial j(x, u_n(x))$ (see Clarke \cite{Clarke}). 

  From the choice of the sequence $\{u_n^*\}_{n \geqslant 1} \subseteq W_0^{1,p(x)}(\Omega)$, at least for a subsequence, we have
\begin{equation}\label{syl}
  |\langle u_n^*,w \rangle| \leqslant \frac{\varepsilon_n\|w\|}{1 +\|u_n\|}
	\quad \textrm{for all } w \in
  W^{1,p(x)}_0(\Omega),
\end{equation}
  with $\varepsilon_n \searrow 0$.
  Putting $w=u_n$ in (\ref{syl}) and using (\ref{11ss}), we obtain
\begin{equation} \label{236k}
  -\varepsilon_n \leqslant - \int_{\Omega} |\nabla u_n(x)|^{p(x)} dx +\lambda 
  \int_{\Omega} |u_n(x)|^{p(x)} dx +\int_{\Omega} v_n^* (x)u_n(x) dx.
\end{equation}

\noindent Now, let us consider two cases.\\

\noindent \textit{Case $1$. }
 Let $\lambda \leqslant 0$.

  Adding (\ref{1111}) and (\ref{236k}), we have
\begin{eqnarray} \label{568}
  -M-\varepsilon_n &\leqslant&  \Big( \frac{1}{p^-}-1 \Big) \int_{\Omega}|\nabla u_n 
  (x)|^{p(x)} dx+ |\lambda| \Big( \frac{1}{p^-}-1\Big)\int_{\Omega} |u_n 
  (x)|^{p(x)}dx \nonumber\\
                   &         &\qquad \qquad +\int_{\Omega} v_n^* (x)u_n(x) dx - \int_{\Omega} j(x,u_n (x))dx.
\end{eqnarray}
  So we obtain that
\begin{eqnarray} \label{335}
   |\lambda|\Big( 1-\frac{1}{p^-}\Big) \int_{\Omega} |u_n (x)|^{p(x)}dx 
  \leqslant \quad \quad \nonumber \\
  M+\varepsilon_n +\int_{\Omega} v_n^* (x)u_n(x) dx - \int_{\Omega} j(x,u_n (x))dx.
\end{eqnarray}

  By virtue of hypotheses $H(j)(iv)$, we know that there exist constant $c>2 c_1$, such that  
\[
  \limsup\limits_{|t|\rightarrow \infty} \frac{v^*t-j(x,t)}{|t|^{r(x)}} \leqslant -c,
\]
  uniformly for almost all $x \in \Omega$ and all $v^* \in \partial j(x,t)$ with $p^+ < r^- \leqslant r(x) < \widehat{p}^*$ for all $x \in \Omega$. So in particularly, 
  there exists $L>0$ such that for almost all $x \in \Omega$ and all $|t| \geqslant L$, we have
\begin{equation} \label{waznee}
  v^*t-j(x,t) \leqslant -\frac{c}{2}|t|^{r(x)}.
\end{equation}
  On the other hand, from the Lebourg mean value theorem (see Clarke \cite{Clarke}),
  for almost all $x \in \Omega$ and all $t \in \mathbb{R}$, we  can find $v(x) \in \partial j(x, k u(x))$ with $0<k<1$, such that
\[
  |j(x,t)-j(x,0)| \leqslant |v||t|.
\]
  So from hypothesis $H(j)(iii)$, for almost all $x \in \Omega$, we have
\[ \label{210}
  |j(x,t)| \leqslant |j(x,0)|+c_1|t|^{r(x)} \leqslant c_1|t|^{r^+}.
\]
  Then for almost all $x \in \Omega$ and all $t$ such that $|t|<L$, it follows that
\begin{equation} \label{123}
  |j(x,t)| \leqslant c_2,
\end{equation}
  for some $c_2>0$. Therefore, from (\ref{waznee}) and (\ref{123}) it follows that 
  for almost all $x \in \Omega$ and all $ t \in \mathbb{R}$, we have
\begin{equation} \label{2045}
  v^*t-j(x,t) \leqslant -\frac{c}{2}|t|^{r(x)}+\beta,
\end{equation}
  for some $\beta>0$ and $p^+ < r^- \leqslant r(x) < \widehat{p}^*$ for all $x \in \Omega$.

  We use (\ref{2045}) in (\ref{335}) and obtain
\[
 |\lambda|\Big( 1-\frac{1}{p^-}\Big) \int_{\Omega} |u_n (x)|^{p(x)}dx 
  \leqslant M+\varepsilon_n - \frac{c}{2}\int_{\Omega}|u_n(x)|^{r(x)} dx +  
  \int_{\Omega} \beta dx,
\]
  which leads to
\[
  |\lambda| \Big( 1-\frac{1}{p^-} \Big)\int_{\Omega} |u_n (x)|^{p(x)}dx 
\leqslant M_1,
\]
  for some $M_1>0$.
  We know that $ |\lambda| \Big( 1-\frac{1}{p^-} \Big) >0$, so
\begin{equation}\label{ogr}
  \textrm{the sequence } \{u_n\}_{n \geqslant 1} \subseteq L^{p(x)} (\Omega) \textrm{ is bounded.}
\end{equation}

  Now, consider again (\ref{568}) to obtain
\[
  \Big( 1-\frac{1}{p^-}\Big) \int_{\Omega} |\nabla u_n (x)|^{p(x)}dx 
  \leqslant M+\varepsilon_n +\int_{\Omega} v_n^* (x)u_n(x) dx - \int_{\Omega} j(x,u_n (x))dx.
\]
  In a similar way, by using (\ref{2045}) we have
\[
   \Big( 1-\frac{1}{p^-}\Big) \int_{\Omega} |\nabla u_n (x)|^{p(x)}dx 
  \leqslant  M+\varepsilon_n - \frac{c}{2} \int_{\Omega} |u_n(x)|^{r(x)} dx+\int_{\Omega} \beta dx,
\]
  for all $n \geqslant 1$ with $p^+ < r^- \leqslant r(x) < \widehat{p}^*$ for all $x \in \Omega$.
Hence, we get
\[
  \Big( 1-\frac{1}{p^-}\Big) \int_{\Omega} |\nabla u_n (x)|^{p(x)}dx 
\leqslant M_2,
\]
  for some $M_2>0$.
  So, we have that
\begin{equation}\label{ogr1}
  \textrm{the sequence } \{\nabla u_n\}_{n \geqslant 1} \subseteq L^{p(x)} (\Omega;\mathbb{R}^N) \textrm{ is bounded.}
\end{equation}

  From (\ref{ogr}) and (\ref{ogr1}), we have that
\[
  \textrm{the sequence } \{u_n\}_{n \geqslant 1} \subseteq W_0^{1,p(x)} (\Omega) \textrm{ is bounded.}
\]

\noindent \textit{Case $2$. }
Now, let $\lambda > 0$. 

  Again from (\ref{1111}) and (\ref{236k}), we have
\begin{eqnarray} \label{890}
  -M-\varepsilon_n &\leqslant&  \Big( \frac{1}{p^-}-1 \Big) \int_{\Omega}|\nabla u_n 
   (x)|^{p(x)} dx+ \lambda \Big(1- \frac{1}{p^+}\Big)\int_{\Omega} |u_n (x)|^{p(x)}dx \nonumber\\
                   &         &\qquad \qquad +\int_{\Omega} v_n^* (x)u_n(x) dx - \int_{\Omega} j(x,u_n (x))dx.
\end{eqnarray}
  Since $\Big(\frac{1}{p^-}-1\Big)<0$ and using (\ref{2045}), we have 
\begin{eqnarray*} 
  -M-\varepsilon_n &\leqslant& \lambda \Big(1- \frac{1}{p^+}\Big)\int_{\Omega} |u_n
  (x)|^{p(x)}dx \nonumber\\
                   &         &\qquad \qquad -\frac{c}{2} \int_{\Omega} |u_n(x)|^{r(x)} dx + \int_{\Omega}\beta dx.
\end{eqnarray*}
  for all $n \geqslant 1$ and $p^+ < r^- \leqslant r(x) < \widehat{p}^*$ for all $x \in \Omega$.
  Hence, we have
\begin{eqnarray*} 
\frac{c}{2} \int_{\Omega} |u_n(x)|^{r(x)} dx &\leqslant& \lambda \Big(1- \frac{1}{p^+}\Big)\int_{\Omega} |u_n (x)|^{p(x)}dx +K,
\end{eqnarray*}
  for some $K>0$.
 Since $ p(x) \leqslant p^+ <r^- \leqslant r(x)$  for all $x \in \Omega$, we have
\[
  \textrm{the sequence } \{u_n\}_{n \geqslant 1} \subseteq L^{r(x)} (\Omega) \textrm{ is bounded.}
\]

  For any $n \geqslant 1$ such that $\|u_n\|_{p(x)} \leqslant 1$ we have 
\[
  \|u_n\|_{p(x)}^{p^+}< \int_{\Omega} |u_n(x)|^{p(x)} dx < \int_{\Omega} |u_n(x)|^{p^-} dx \leqslant K_1,
\]
  for some $K_1>0$ (see Lemma \ref{lemma2}).

  On the other hand, for any $n \geqslant 1$ such that $\|u_n\|_{p(x)} > 1$, we have
\[
  \|u_n\|_{p(x)}^{p^-}< \int_{\Omega} |u_n(x)|^{p(x)} dx < \int_{\Omega} 
  |u_n(x)|^{p^+} dx < \|u_n\|_{r(x)}^{r(x)} \leqslant K_2,
\]
with some $K_2>0$.
  Thus
\begin{equation} \label{ss}
  \textrm{the sequence } \{u_n\}_{n \geqslant 1} \subseteq L^{p(x)} (\Omega) \textrm{ is bounded}.
\end{equation}

  Now, again from (\ref{890}), we have
\begin{eqnarray}\label{338899} 
  \Big( 1-\frac{1}{p^-} \Big) \int_{\Omega}|\nabla u_n (x)|^{p(x)} dx \leqslant
   M+\varepsilon_n+ \lambda \Big(1- \frac{1}{p^+}\Big)\int_{\Omega} |u_n 
  (x)|^{p(x)}dx \nonumber \nonumber\\
  +\int_{\Omega} v_n^* (x)u_n(x) dx - \int_{\Omega} j(x,u_n (x))dx.
\end{eqnarray}
  Using (\ref{2045}) and (\ref{ss}) in (\ref{338899}), we obtain
\[
  \Big( 1-\frac{1}{p^-} \Big) \int_{\Omega}|\nabla u_n (x)|^{p(x)} dx \leqslant  M_3,
\]
  for some $M_3>0$.
  Since $\Big( 1-\frac{1}{p^-}\Big)>0$, we have that
\begin{equation}\label{ogr3}
\textrm{the sequence } \{\nabla u_n\}_{n \geqslant 1} \subseteq L^{p(x)} (\Omega;\mathbb{R}^N) \textrm{ is bounded.}
\end{equation}
  From (\ref{ss}) and (\ref{ogr3}), we have that
\[
  \textrm{the sequence } \{u_n\}_{n \geqslant 1} \subseteq W_0^{1,p(x)} (\Omega) \textrm{ is bounded.}
\]
 From Cases $1$ and $2$, we have that 
\[
  \textrm{the sequence } \{u_n\}_{n \geqslant 1} \subseteq W_0^{1,p(x)} (\Omega) \textrm{ is bounded}.
\]

  Hence, by passing to a subsequence if necessary, we may assume that
\begin{equation}\label{1}
  \left.
  \begin{array}{ll}
   u_n \rightarrow u & \textrm{weakly in } W_0^{1,p(x)} (\Omega),\\
   u_n \rightarrow u & \textrm{in } L^{p(x)}(\Omega),
  \end{array}
  \right.
\end{equation}
  for some $u \in  W_0^{1,p(x)} (\Omega)$.
  Putting $w=u_n-u$ in (\ref{syl}) and using (\ref{11ss}), we obtain
\begin{eqnarray} \label{47}
  \Big|\langle Au_n,u_n-u\rangle - \lambda \int_{\Omega} 
  |u_n(x)|^{p(x)-2}u_n(x)(u_n-u)(x)dx\nonumber\\
	-\int_{\Omega} v_n^*(x) (u_n-u)(x)dx\Big|\leqslant \varepsilon_n,
\end{eqnarray}
  with $\varepsilon_n \searrow 0$.
  Using Lemma \ref{fan}(c), we see that
\begin{eqnarray*}
  &           & \lambda \int_{\Omega} |u_n(x)|^{p(x)-2}u_n(x)(u_n-u)(x)dx\cr
  & \leqslant & \lambda \Big(\frac{1}{p^-}+\frac{1}{p'^-}\Big) \|\, |u_n|^{p(x)-1}\|_{p'(x)}\|u_n-u\|_{p(x)},
\end{eqnarray*}
   where $\frac{1}{p(x)}+\frac{1}{p'(x)}=1$.
  We know that the sequence $\{u_n\}_{n \geqslant 1} \subseteq L^{p(x)}(\Omega)$
  is bounded, so using (\ref{1}), we can conclude that
\[
  \lambda \int_{\Omega} |u_n(x)|^{p(x)-2}u_n(x)(u_n-u)(x)dx \rightarrow 0 \quad \textrm{as } n \rightarrow \infty
\]
  and
\[
  \int_{\Omega} v_n^*(x) (u_n-u)(x)dx \rightarrow 0 \quad \textrm{as } n \rightarrow \infty.
\]
  If we pass to the limit as $n \rightarrow \infty$ in (\ref{47}), we have
\[
  \limsup\limits_{n \rightarrow \infty} \langle Au_n, u_n-u \rangle \leqslant 0.
\]
  So from Lemma \ref{lemma3}, we have that $u_n \rightarrow u$ in $W^{1,p(x)}_0(\Omega)$ as $ n \rightarrow \infty$. Thus $R$ satisfies the  C--condition.
\end{proof}

\bigskip

  For the first existence theorem, we will need an additional assumption\\

\noindent \textbf{$H(j)_1$} there exists $\nu>0$ such that
\[
  \limsup\limits_{|t|\rightarrow 0} \frac{j(x,t)}{|t|^{h(x)}} \leqslant -\nu,
\]
  uniformly for almost all $x \in \Omega$ and for some $h(x) \in \mathcal{C} (\overline{\Omega})$ with $1<h(x) \leqslant h^+< p^-<\widehat{p}^*$ for all $x \in \Omega$.\\

\begin{theorem}
 If hypotheses $H(j)$ and $H(j)_1$ hold
then problem (\ref{eq_01}) has a nontrival solution for all $\lambda \in (-\infty, \nu p^-)$.
\end{theorem}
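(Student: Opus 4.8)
The plan is to apply the nonsmooth mountain pass theorem (Theorem \ref{twierdzenie}) to the functional $R$. By Lemma \ref{PS}, $R$ already satisfies the nonsmooth C--condition, and it is locally Lipschitz, so only the geometric hypotheses remain: $R(0)=0$ (immediate since $j(x,0)=0$); the existence of $\rho>0$ with $\eta:=\inf_{\|u\|=\rho}R(u)>0$; and the existence of $y$ with $\|y\|>\rho$ and $R(y)<\eta$. Granting these, Theorem \ref{twierdzenie} provides a critical point $u\in W_0^{1,p(x)}(\Omega)$ with $R(u)\ge\eta>0=R(0)$, so $u\neq 0$; and unwinding $0\in\partial R(u)$ exactly as in the proof of Lemma \ref{PS} (see (\ref{11ss})) gives $Au=\lambda|u|^{p(x)-2}u+v^*$ with $v^*(x)\in\partial j(x,u(x))$ a.e.\ in $\Omega$, so that $u$ is a nontrivial (weak) solution of (\ref{eq_01}).

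First I would record the pointwise bound obtained by combining $H(j)_1$ and $H(j)(iii)$: for every $\varepsilon\in(0,\nu)$ there is $C_\varepsilon>0$ with
\[
  j(x,t)\le(-\nu+\varepsilon)|t|^{h(x)}+C_\varepsilon|t|^{r(x)}\qquad\text{for a.e.\ }x\in\Omega,\ \text{all }t\in\mathbb{R}.
\]
Indeed, Lebourg's mean value theorem and $H(j)(iii)$ give $|j(x,t)|\le c_1|t|^{r(x)}$ for all $t$, while $H(j)_1$ yields $\delta=\delta(\varepsilon)\in(0,1)$ with $j(x,t)\le(-\nu+\varepsilon)|t|^{h(x)}$ for $|t|\le\delta$; splitting into $|t|\le\delta$ and $|t|>\delta$, and on the latter range absorbing $|t|^{h(x)}$ into $|t|^{r(x)}$ (using $h(x)<r(x)$, $\delta<1$), produces the displayed inequality. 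For the geometry near the origin, let $\|u\|=\rho<1$. Using $-\frac{\lambda}{p(x)}\ge-\frac{\lambda^+}{p^-}$ with $\lambda^+:=\max\{\lambda,0\}$, the above bound, Poincar\'e's inequality (Lemma \ref{fan}(b), whence $\|\nabla u\|_{p(x)}\ge\|u\|/(1+c)$ and then $\int_\Omega|\nabla u|^{p(x)}\ge(\rho/(1+c))^{p^+}$ by Lemma \ref{lemma2}), and the continuous embedding $W_0^{1,p(x)}(\Omega)\hookrightarrow L^{r(x)}(\Omega)$ (Lemma \ref{fan}(a), whence $\int_\Omega|u|^{r(x)}\le(c'\rho)^{r^-}$ for $\rho$ small), one reaches
\[
  R(u)\ge\frac{1}{p^+}\Big(\frac{\rho}{1+c}\Big)^{p^+}+\Big[(\nu-\varepsilon)\int_\Omega|u|^{h(x)}-\frac{\lambda^+}{p^-}\int_\Omega|u|^{p(x)}\Big]-C_\varepsilon(c'\rho)^{r^-}.
\]
The bracket is handled by splitting $\Omega$ into $\{|u|\le1\}$ and $\{|u|>1\}$: on the first set $|u|^{h(x)}\ge|u|^{p(x)}$ because $h^+<p^-$, so the integrand there is $\ge(\nu-\varepsilon-\lambda^+/p^-)|u|^{p(x)}\ge0$ once $\varepsilon$ is small --- which is possible exactly because $\lambda<\nu p^-$ forces $\lambda^+/p^-<\nu$; on the second set the integrand is $\ge-\frac{\lambda^+}{p^-}|u|^{r(x)}$, again absorbed into the $|u|^{r(x)}$ remainder. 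Since $r^->p^+$, choosing $\rho$ small makes the $\rho^{p^+}$ term dominate, giving $\eta>0$.

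For the far point, fix $v\in C_0^\infty(\Omega)\setminus\{0\}$ and let $t\to+\infty$. Combining $H(j)(iv)$ with $|v^*|\le c_1|t|^{r(x)-1}$ (so $v^*t\ge-c_1|t|^{r(x)}$) and $c>2c_1$ gives $j(x,t)\ge(c-2c_1)|t|^{r(x)}-C$ for a.e.\ $x$ and all $t$, so $-\int_\Omega j(x,tv)\le-(c-2c_1)\big(\int_\Omega|v|^{r(x)}\big)t^{r^-}+C'$ for $t\ge1$; since also $\int_\Omega\frac{1}{p(x)}|\nabla(tv)|^{p(x)}+|\lambda|\int_\Omega\frac{1}{p(x)}|tv|^{p(x)}\le K_1t^{p^+}$ for $t\ge1$ and $r^->p^+$ with $\int_\Omega|v|^{r(x)}>0$, we get $R(tv)\to-\infty$. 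Taking $y=tv$ with $t$ large enough that $\|tv\|>\rho$ and $R(tv)<0<\eta$ completes the verification, and Theorem \ref{twierdzenie} applies. I expect the near-origin estimate $\inf_{\|u\|=\rho}R(u)>0$ to be the main obstacle --- especially the case $\lambda>0$, where the term $-\frac{\lambda}{p(x)}|u|^{p(x)}$ works against positivity of $R$ and must be dominated by $\nu\int_\Omega|u|^{h(x)}$, which is possible precisely under the threshold $\lambda<\nu p^-$.
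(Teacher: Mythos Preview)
Your proposal is correct and follows essentially the same route as the paper's proof: both verify the mountain--pass geometry for $R$ (the near--origin lower bound via the pointwise estimate $j(x,t)\le -\nu'|t|^{h(x)}+d\,|t|^{r(x)}$ together with the threshold $\lambda<\nu p^-$, then anticoercivity from $H(j)(iv)$ combined with $H(j)(iii)$), invoke Lemma \ref{PS} for the C--condition, and apply Theorem \ref{twierdzenie}. The only cosmetic differences are that the paper treats the two signs of $\lambda$ in separate cases whereas you unify them via $\lambda^+$, and your domain--splitting $\{|u|\le 1\}\cup\{|u|>1\}$ is a more explicit version of the paper's comparison between the $|u|^{h(x)}$ and $|u|^{p(x)}$ integrals.
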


\begin{proof}
 \textbf{Claim.1.} There exists $\rho \in (0, 1)$ small enough such that, we have 
  $R(u)\geqslant L$, for all $u \in W_0^{1,p(x)}(\Omega)$ with $\|u\|=\rho$ and some 
  $L>0$.\\
	Indeed by using hypothesis $H(j)_1$, we can find $\delta>0$, such that for
  almost all $x \in \Omega$ and all $t$ such that $|t|\leqslant \delta$, we have
\[
  j(x,t) \leqslant -\nu|t|^{h(x)}, \quad \textrm{ where } \quad 1<h(x) \leqslant h^+< p^-.
\]
  On the other hand, from hypothesis $H(j)(iii)$, we know
  that for almost all $x \in \Omega$ and all $t$ such that $|t|>\delta$, we have
\[
 |j(x,t)| \leqslant c_1|t|^{r(x)},
\]
  where $p^+ <r(x)<\widehat{p}^*$ for all $x \in \Omega$.
  Thus for almost all $x \in \Omega$ and all $t \in \mathbb{R}$ we have
\begin{equation}\label{porow}
  j(x,t) \leqslant-\nu|t|^{h(x)}+d_1 |t|^{r(x)},
\end{equation}
  with some $d_1>0$, $1<h(x) \leqslant h^+< p^- \leqslant p(x) \leqslant p^+ < r^-\leqslant r(x)<\widehat{p}^*$ for all $x \in \Omega$.

  Moreover, since $W_0^{1,p(x)}(\Omega)$ is embedded continuously into $L^{p(x)}(\Omega)$, $L^{h(x)}(\Omega)$ and $L^{r(x)}(\Omega)$ 
	(see Lemma (\ref{fan})), so for $\beta(x):= p(x)$ (respectively $h(x)$ or $r(x)$), we have that 

\begin{equation}\label{wlozenie1}
  \|u\|_{\beta(x)} \leqslant K_3 \|u\|, 
\end{equation}
  for all $u \in W_0^{1,p(x)}(\Omega)$ and some $K_3>0$.

  If we fix $\rho \in (0,1)$ such that $\rho < \min \{ 1, \frac{1}{K_3} \}$, then for
  all $u \in W_0^{1,p(x)}(\Omega)$, with $\|u\|=\rho$, from (\ref{wlozenie1}) we can deduce that 
\[
 \|u\|_{\beta(x)} \leqslant 1 \quad \textrm{ where } \quad \beta(x):=p(x) \textrm{ (respectively } h(x) \textrm{ or } r(x)).
\]
  Futhermore, using Lemma \ref{lemma2} and (\ref{wlozenie1}), we obtain

\begin{equation} \label{nier1}
  \int_{\Omega} |u(x)|^{\beta(x)} dx \leqslant \|u(x)\|^{\beta^-}_{\beta(x)} \leqslant K_3 \|u\|^{\beta^-},
\end{equation}
  for $\beta(x):=p(x)$ (respectively $h(x)$ or $r(x)$).

  Moreover, since $1<h(x) \leqslant h^+<p(x)\leqslant p^+ < r^-\leqslant r(x)$, then for all $u \in W_0^{1,p(x)}(\Omega)$, with $\|u\|=\rho$, we have that 
\begin{equation}\label{crucial}
  \|u\|_{r(x)} \leqslant\|u\|_{p(x)} \leqslant \|u\|_{h(x)}.
\end{equation}

  Let us consider two cases.

\bigskip

\noindent \textit{Case 1. }
 Let $\lambda \leqslant 0$. 

  By using (\ref{porow}), (\ref{nier1}) and Lemma \ref{lemma2}, we obtain that
\begin{eqnarray*}
   R(u) &    =    &\int_{\Omega} \frac{1}{p(x)} |\nabla u (x)|^{p(x)} dx-\int_{\Omega} \frac{\lambda}{p(x)}|u (x)|^{p(x)}dx - \int_{\Omega} j(x,u (x))dx\\
        &\geqslant&  \frac{1}{p^+} \int_{\Omega} |\nabla u (x)|^{p(x)} dx + \frac{|\lambda|}{p^+} \int_{\Omega} |u (x)|^{p(x)}dx\\
        &         & \qquad \qquad \qquad +\nu\int_{\Omega} |u(x)|^{h(x)} dx - d_1 \int_{\Omega}|u(x)|^{r(x)} dx\\
        &\geqslant& c_5 \|u\|^{p^+} - d_1 \int_{\Omega}|u(x)|^{r(x)} dx \geqslant c_5 \|u\|^{p^+}-d_1\|u\|^{r^-},
\end{eqnarray*}
  where $c_5=\min \{ \frac{1}{p^+}, \frac{|\lambda|}{p^+} \}$ and $\nu>0$.

  Since $p^+<r^-\leqslant r(x)$ for all $x \in \Omega$, we have $R(u)\geqslant L>0$, for all $u \in W_0^{1,p(x)}(\Omega)$, with $\|u\|=\rho$.

\bigskip

\noindent \textit{Case 2. }
 Let $\lambda >0$.

  Using (\ref{porow}) and (\ref{crucial}), we obtain that
\begin{eqnarray*}
   R(u) &     =     &\int_{\Omega} \frac{1}{p(x)} |\nabla u (x)|^{p(x)} dx-\int_{\Omega} \frac{\lambda}{p(x)}|u (x)|^{p(x)}dx - \int_{\Omega} j(x,u (x))dx\\
        & \geqslant &  \frac{1}{p^+} \int_{\Omega} |\nabla u (x)|^{p(x)} dx-\frac{\lambda}{p^-} \int_{\Omega}|u(x)|^{p(x)}dx\\
        &           &\hspace{2.7cm}+ \nu\int_{\Omega}|u(x)|^{h(x)} dx - d_1 \int_{\Omega}|u(x)|^{r(x)} dx\\
        &\geqslant  & \frac{1}{p^+} \int_{\Omega} |\nabla u(x)|^{p(x)} dx + \Big(\nu-\frac{\lambda}{p^-}\Big) \int_{\Omega} |u(x)|^{p(x)}dx-d_1 \int_{\Omega}|u(x)|^{r(x)} dx.\\
\end{eqnarray*}
  From hypothesis, we know that $\nu-\frac{\lambda}{p^-} >0$ and by using (\ref{nier1}), we have
\[
  R(u) \geqslant c_6 \|u\|^{p^+}-d_1 \|u\|^{r^-},
\]
  where $c_6=\min \{ \frac{1}{p^+}, \nu-\frac{\lambda}{p^-}\}$.

  So again, we have that $R(u)\geqslant L>0$, for all $u \in W_0^{1,p(x)}(\Omega)$, with $\|u\|=\rho$.\\

\textbf{Claim.2.} 
  $R(u)$ is anticoercive, i.e. $R(u) \rightarrow -\infty$ as $\|u\| \rightarrow \infty$. \\
  We assume that $\|u\|>1$. 
  Again using hypothesis $H(j)(iv)$, for almost all $x \in \Omega$ and all $t$ such that $t>M$, we have

\begin{equation} \label{2041}
  j(x,t) \geqslant v^*t +\frac{c}{2}|t|^{r(x)}-\beta,
\end{equation}
  for some $\beta>0$ and $p^+ < r^- \leqslant r(x) < \widehat{p}^*$ for all $x \in \Omega$ (see (\ref{2045})).

  On the other hand, from $H(j)(iii)$, we see that for almost all $x \in \Omega$ we
  have $|v^* t| \leqslant c_1|t|^{r(x)}$, where $c_1>0.$ So from  (\ref{2041}) and 
  this inequality, we obtain
\begin{equation}\label{xx}
  j(x,t) > \frac{c}{2} |t|^{r(x)} - c_1 |t|^{r(x)}-\beta=c_3|t|^{r(x)}-\beta,
\end{equation}
  where $c_3>0$ (since $c>2 c_1$) with $p(x) \leqslant p^+ <r(x) \leqslant r^+< \widehat{p}^*$.

  Using (\ref{xx}) and Lemma \ref{lemma2}, for any  $u \in W^{1,p(x)}_0 (\Omega) \backslash \{0\}$ and $s>1$, we have
\begin{eqnarray*}
  R(su)&    =     & \int_{\Omega} \frac{1}{p(x)} |\nabla s u (x)|^{p(x)} dx-\int_{\Omega} \frac{\lambda}{p(x)}|s u (x)|^{p(x)}dx - \int_{\Omega} j(x,s u(x))dx\\
       &\leqslant & s^{p^+} \Big(\frac{1}{p^-} \int_{\Omega} | \nabla u (x)|^{p(x)} dx +\frac{|\lambda|}{p^{-}} \int_{\Omega} |u(x)|^{p(x)} dx \Big)-\int_{\Omega} j(x,s u(x))dx\\
       &\leqslant & \overline{c} \cdot s^{p^+} \big(\int_{\Omega} (|\nabla u (x)|^{p(x)}+|u(x)|^{p(x)})dx\big) - c_3 \int_{\Omega} |s u(x)|^{r(x)}dx+\int_{\Omega} \beta dx\\
       &\leqslant & \overline{c} \cdot s^{p^+}\|u\|^{p^+}- c_3\cdot s^{r^-} \int_{\Omega} |u(x)|^{r(x)}dx+\int_{\Omega} \beta dx,
\end{eqnarray*}
  where $\overline{c}=\max\{ \frac{1}{p^-},\frac{|\lambda|}{p^-}\}$ and $p^+ <r^+\leqslant r(x) <\widehat{p}^*.$\\

  Because $r^->p^+$, we get that
  $R(s u) \rightarrow -\infty$ when $s \rightarrow \infty$. This permits the use of  
  Theorem \ref{twierdzenie} which gives us $u \in W_0^{1,p(x)}(\Omega)$ such that 
  $R(u)>0 = R(0)$ and $0 \in \partial R(u)$.

From the last inclusion we obtain
\[
  0=Au-\lambda |u|^{p(x)-2}u-v^*,
\]
  where $v^* \in \partial \psi(u).$ Hence
\[
  Au=\lambda |u|^{p(x)-2}u+v^*,
\]
  so for all $v \in \mathcal{C}_0^\infty (\Omega)$, we have $\langle Au,v \rangle = \lambda \langle |u|^{p(x)-2}u,v\rangle +\langle v^*,v \rangle$. 

  So we have
\begin{eqnarray*}
  &   & \int_{\Omega} |\nabla u(x)|^{p(x)-2}(\nabla u(x), \nabla v(x))_{\mathbb{R}^N}dx\cr
  & = & \int_{\Omega}\lambda |u(x)|^{p(x)-2}u(x)v(x)dx+\int_{\Omega}v^*(x)v(x) dx,
\end{eqnarray*}
  for all $v \in \mathcal{C}_0^\infty(\Omega)$.

  From the definition of the distributional derivative we have
\[
  \left\{
  \begin{array}{lr}
    -\textrm{div} \big( |\nabla u(x)|^{p(x)-2} \nabla u(x) \big)=\lambda |u(x)|^{p(x)-2} u(x) +v(x)& \textrm{in } \Omega,\\
    u=0 & \textrm{on}\ \partial \Omega,
  \end{array}
  \right.
\]
  so
\[
  \left\{
  \begin{array}{lr}
    -\Delta_{p(x)}u(x)-\lambda |u(x)|^{p(x)-2} u(x)\in \partial j(x, u(x))& \textrm{in } \Omega,\\
    u=0 & \textrm{on}\ \partial \Omega.
  \end{array}
  \right.
\]
  Therefore $u \in W_0^{1,p(x)}(\Omega)$ is a nontrivial solution of (\ref{eq_01}).
\end{proof}

\begin{remark}
  A nonsmooth potential satisfying hypotheses $H(j)$ and $H(j)_1$ is for example the one given by the following function
\[
  j_1(x,t)= 
   \left\{
     \begin{array}{lcc}
      -\nu|t|^{h(x)} & \textrm{if} & |t| \leqslant 1,\\
      -|t|^{r^+}-\nu+1 & \textrm{if} & |t| > 1,\\
     \end{array}
    \right.
\]
  with $\nu >0$ and continuous functions $h, r:\overline{\Omega} \rightarrow \mathbb{R}$ which satisfy  $1< h(x) \leqslant h^+<p^- \leqslant p(x) \leqslant p^+<r^- \leqslant r(x) \leqslant r^+<\widehat{p}^* $.\\
\end{remark}

\bigskip

  Instead of hypothesis $H(j)_1$ we can take additional assumption about behaviour 
  in infinity and also obtain existence of a nontrival solution.\\

\noindent \textbf{$H(j)_2$} there exists $\mu> 2c_1$ such that
\[
  \limsup\limits_{|t|\rightarrow \infty} \frac{j(x,t)}{|t|^{r(x)}} \leqslant -\mu,
\]
  uniformly for almost all $x \in \Omega$ with $1<p(x) \leqslant p^+< r^-\leqslant r(x) <\widehat{p}^*$ for all $x \in \Omega$. \\

\begin{theorem}
 If hypotheses $H(j)$ and $H(j)_2$ hold 
 then problem (\ref{eq_01}) has a nontrival solution for any $\lambda \in \mathbb{R}$.
\end{theorem}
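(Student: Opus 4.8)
The plan is to exploit the fact that, under $H(j)_2$, the energy functional $R$ becomes coercive (instead of having the mountain--pass geometry used in the previous theorem), so that its global minimum is attained at a critical point, and then to check that this minimum is negative, hence not the trivial critical point $u=0$.

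\textbf{Step 1 (coercivity of $R$).} Proceeding as in the derivation of (\ref{waznee})--(\ref{2045}) in the proof of Lemma \ref{PS}, but using $H(j)_2$ in place of $H(j)(iv)$ and $H(j)(iii)$ (via the Lebourg mean value theorem) to control $j$ on bounded sets, I would first obtain a constant $\beta>0$ with
\[
  j(x,t) \leqslant -\frac{\mu}{2}|t|^{r(x)} + \beta \qquad \text{for a.e. } x\in\Omega,\ \text{all } t\in\mathbb{R}.
\]
Then, for $u\in W_0^{1,p(x)}(\Omega)$,
\[
  R(u) \geqslant \frac{1}{p^+}\int_\Omega |\nabla u(x)|^{p(x)}\,dx - \frac{|\lambda|}{p^-}\int_\Omega |u(x)|^{p(x)}\,dx + \frac{\mu}{2}\int_\Omega |u(x)|^{r(x)}\,dx - \beta|\Omega|.
\]
Since $p(x)\leqslant p^+ < r^- \leqslant r(x)$ on $\overline{\Omega}$, Young's inequality provides, for the fixed $\lambda$, a constant $K>0$ with $\frac{|\lambda|}{p^-}|s|^{p(x)} \leqslant \frac{\mu}{4}|s|^{r(x)} + K$ for all $s\in\mathbb{R}$ and all $x$, which absorbs the middle term and leaves $R(u) \geqslant \frac{1}{p^+}\int_\Omega |\nabla u(x)|^{p(x)}\,dx - (\beta+K)|\Omega|$. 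By the Poincar\'e inequality of Lemma \ref{fan}(b) we have $\|\nabla u\|_{p(x)} \geqslant c'\|u\|$, so for $\|u\|$ large Lemma \ref{lemma2}(c) gives $\int_\Omega |\nabla u(x)|^{p(x)}\,dx \geqslant \|\nabla u\|_{p(x)}^{p^-} \geqslant (c')^{p^-}\|u\|^{p^-}$, whence $R(u)\to+\infty$ as $\|u\|\to\infty$. (As in Lemma \ref{PS} one may instead split into the cases $\lambda\leqslant 0$, where the middle term is already nonnegative, and $\lambda>0$.) In particular $R$ is bounded below.

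\textbf{Step 2 (minimization).} Since $H(j)$ holds, Lemma \ref{PS} gives that $R$ satisfies the nonsmooth C--condition (and once Step 1 is known, boundedness of any Cerami sequence is immediate from coercivity, so only the $(S)_+$--argument of Lemma \ref{lemma3} from the end of the proof of Lemma \ref{PS} is really needed). As $R$ is locally Lipschitz, bounded below, and satisfies the nonsmooth C--condition, Theorem \ref{twierdzenie2} applies and yields $u_0\in W_0^{1,p(x)}(\Omega)$ with $R(u_0)=\inf_{W_0^{1,p(x)}(\Omega)}R=:c$ and $0\in\partial R(u_0)$. Exactly as at the end of the proof of the previous theorem, $0\in\partial R(u_0)$ unwinds to $Au_0=\lambda|u_0|^{p(x)-2}u_0+v^*$ with $v^*(x)\in\partial j(x,u_0(x))$, i.e. $u_0$ solves (\ref{eq_01}).

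\textbf{Step 3 (nontriviality --- the main obstacle).} Since $j(x,0)=0$ we have $R(0)=0$, so it suffices to exhibit one $w\in W_0^{1,p(x)}(\Omega)$ with $R(w)<0$: then $c\leqslant R(w)<0=R(0)$, forcing $u_0\neq 0$. I expect this to be the hardest point, because --- unlike in the previous theorem, where the mountain--pass value already exceeded $R(0)$ --- negativity of $\inf R$ is not automatic here. The sources of negativity to exploit are the term $-\int_\Omega \frac{\lambda}{p(x)}|u(x)|^{p(x)}\,dx$ (useful when $\lambda>0$) and the contribution of $-\int_\Omega j(x,u(x))\,dx$ on regions where $j$ is positive; the natural candidate for $w$ is a small dilation $t\phi$ of a carefully chosen $\phi\in C_0^\infty(\Omega)\setminus\{0\}$, with $t$ small and $\phi$ adapted to the sign of $\lambda$ and to the behaviour of $j$ near the origin (noting that $H(j)(iii)$ makes $j(x,t)$ of lower order than $|t|^{p(x)}$ as $t\to 0$, so that the sign of $R$ on small functions is governed by $\int_\Omega \frac{1}{p(x)}(|\nabla u|^{p(x)}-\lambda|u|^{p(x)})\,dx$). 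Once such a $w$ is produced, the argument closes and $u_0$ is the desired nontrivial solution of (\ref{eq_01}).
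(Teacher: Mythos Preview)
Your Steps 1 and 2 are exactly the paper's strategy: show that $R$ is bounded below (the paper phrases it as ``$R(u)>L>0$ for $\|u\|>1$'' after deriving a global upper bound $j(x,t)\leqslant -k|t|^{r(x)}$, while you use Young's inequality to absorb the $\lambda$--term), then invoke Theorem~\ref{twierdzenie2} to obtain a global minimizer $u_0$ with $0\in\partial R(u_0)$. So the approach is the same.

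Where you go beyond the paper is Step 3: you correctly isolate nontriviality as the real obstruction. The paper's proof simply stops after producing the minimizer and does not argue that $u_0\neq 0$; your instinct that this is the hard point is right. However, your proposed resolution does not close the gap either. Under $H(j)(iii)$ one has $|j(x,t)|\leqslant c_1|t|^{r(x)}$ with $r(x)>p^+$, so for small $t$ the term $-\int_\Omega j(x,t\phi)\,dx$ is $O(t^{r^-})$, which is of \emph{higher} order than the $O(t^{p(\cdot)})$ principal part $\int_\Omega \frac{1}{p(x)}\bigl(|\nabla(t\phi)|^{p(x)}-\lambda|t\phi|^{p(x)}\bigr)\,dx$. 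Hence the sign of $R(t\phi)$ for small $t$ is governed by this principal part, and for $\lambda\leqslant 0$ (or $\lambda>0$ below the first ``eigenvalue'' of the $p(x)$--Laplacian) it is nonnegative for every $\phi$. Your other suggestion---exploiting regions where $j>0$---is not supported by the hypotheses: $H(j)$, $H(j)_2$ are compatible with $j(x,t)\leqslant 0$ for all $t$ (indeed the paper itself derives $j(x,t)\leqslant -k|t|^{r(x)}$ globally), in which case $R(u)\geqslant 0$ for all $u$ when $\lambda\leqslant 0$ and the unique minimizer is $u_0=0$.

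In short: the skeleton matches the paper, and you have put your finger on a genuine lacuna that the paper's own proof shares; but as written neither argument secures $u_0\neq 0$ for all $\lambda\in\mathbb{R}$ from $H(j)$ and $H(j)_2$ alone.
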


\begin{proof}
  We claim that $R(u)$ is bounded below. We assume that $\|u\|>1$.

  By virtue of hypotheses $H(j)_2$, we know that there exist constants $\mu>2c_1$ and
  $L>0$ such that for almost all $x \in \Omega$ and all $|t| \geqslant L$, we have
\begin{equation} \label{waznee1}
  j(x,t) \leqslant -\frac{\mu}{2}|t|^{r(x)}.
\end{equation}

  On the other hand, from the hypothesis $H(j)(iii)$, for almost all $x \in \Omega$ and all $t<L$, we have
\begin{equation}\label{21011}
  |j(x,t)| \leqslant  c_1|t|^{r(x)},
\end{equation}
  with $p^+<r^- \leqslant r(x)$.
  Therefore, from (\ref{waznee1}) and (\ref{21011}) it follows that for almost all $x \in \Omega$ and all $ t \in \mathbb{R}$, we have
\[
  j(x,t) \leqslant (c_1-\frac{\mu}{2})|t|^{r(x)} \leqslant -k|t|^{r(x)},
\]
  for some $k>0$ (since $\mu>2c_1$) and $p^+ < r^- \leqslant r(x) < \widehat{p}^*$ for all $x \in \Omega$.

Hence, we have
\begin{eqnarray*}
   R(u)&    =   &\int_{\Omega} \frac{1}{p(x)} |\nabla u (x)|^{p(x)} dx-\int_{\Omega} \frac{\lambda}{p(x)}|u (x)|^{p(x)}dx - \int_{\Omega} j(x,u (x))dx\\
      &\geqslant&  \frac{1}{p^+} \int_{\Omega} |\nabla u (x)|^{p(x)} dx-\frac{\lambda_+}{p^-} \int_{\Omega}|u(x)|^{p(x)}dx+ k \int_{\Omega}|u(x)|^{r(x)} dx\\
      &\geqslant& k \int_{\Omega}|u(x)|^{r(x)} dx -\frac{\lambda_+}{p^-} \int_{\Omega}|u(x)|^{p(x)}dx,\\
\end{eqnarray*}
  where $\lambda_+ :=\max \{0, \lambda \}.$
  Since $r^->p^+$, so $R(u)>L>0$ for all $u \in W^{1,p(x)}_0(\Omega)$ with $\|u\|>1$.\\

  We know that $R$ satisfies C--condition. So we apply Theorem \ref{twierdzenie2} and
  obtain $u_0 \in W^{1,p(x)}_0(\Omega)$ such that $R(u_0)=\inf \{R(u): u \in W^{1,p(x)}_0 (\Omega) \}$. This implies that $u_0$ is a critical point of $R$, and so it is a solution of (\ref{eq_01}).
\end{proof}

\begin{remark}
  The existence of a nontrival solution for problem (\ref{eq_01}) was also considered in the
  papers of Barna\'s \cite{barnas,barnas2,barnas3}. In contrast to the last papers, we have no restriction on $\lambda$, it means $\lambda \in \mathbb{R}$.   
  Moreover, we make the hypothesis as simple as possibe. In hypothesis $H(j)(iv)$, we assume a Tang--type condition which is more general than Landesman--Lazer or Ambrosetti--Rabinowitz condition.
\end{remark}

\end{document}